\theoremstyle{definition}
\newtheorem{theorem}{Theorem}
\newtheorem{lemma}[theorem]{Lemma}
\newtheorem{proposition}[theorem]{Proposition}
\newtheorem{corollary}[theorem]{Corollary}
\newtheorem{fact}[theorem]{Fact}
\title[Minimum degree conditions for small percolating sets]{Minimum degree conditions for small percolating sets in bootstrap percolation}
\author[K. Gunderson]{Karen Gunderson}
\address{Department of Mathematics, University of Manitoba, Winnipeg MB R3T\thinspace2N2, Canada}
\email{karen.gunderson@umanitoba.ca}
\thanks{Research supported in part by NSERC grant RGPIN-2016-05949}
\date{30 March 2017}
\subjclass[2010]{60K35, 05C35}
\keywords{Bootstrap percolation, minimum percolating sets}
\begin{document}

\begin{abstract}
The $r$-neighbour bootstrap process is an update rule for the states of vertices in which `uninfected' vertices with at least $r$ `infected' neighbours become infected and a set of initially infected vertices is said to \emph{percolate} if eventually all vertices are infected.  For every $r \geq 3$, a sharp condition is given for the minimum degree of a sufficiently large graph that guarantees the existence of a percolating set of size $r$.  In the case $r=3$, for $n$ large enough, any graph on $n$ vertices with minimum degree $\lfloor n/2 \rfloor +1$ has a percolating set of size $3$ and for $r \geq 4$ and $n$ large enough (in terms of $r$), every graph on $n$ vertices with minimum degree $\lfloor n/2 \rfloor + (r-3)$ has a percolating set of size $r$.  A class of examples are given to show the sharpness of these results.
\end{abstract}

\maketitle

\section{Introduction}\label{sec:intro}

Bootstrap percolation is a model for the spread of an `infection' in a network.  The $r$-neighbour bootstrap processes are an example of a cellular automaton.  The notion of cellular automata were introduced by von Neumann~\cite{jvN66} after a suggestion of Ulam~\cite{sU52}.  In this paper, an extremal problem related to these processes is considered.

For any integer $r \geq 2$, the $r$-neighbour bootstrap process is an update rule for the states of vertices in a graph which are in one of two possible states at any given time: `infected' or `uninfected'.  From an initial configuration of infected and uninfected vertices, the following state updates occur simultaneously and at discrete time steps: any uninfected vertex with at least $r$ infected neighbours becomes infected while infected vertices remain infected forever.  To be precise, given a graph $G$ and a set $A \subseteq V(G)$ of `initially infected' vertices, set $A_0 = A$ and for every $t \geq 1$ define
\[
A_t = A_{t-1} \cup \{v \in V(G) \mid |N(v) \cap A_{t-1}| \geq r\}.
\]
The \emph{closure} of $A$ is $\langle A \rangle_r = \cup_{t \geq 0} A_t$; the set of vertices that are eventually infected starting from $A = A_0$.  The set $A_t \setminus A_{t-1}$ shall often be referred to as the vertices \emph{infected at time step $t$}.  The set $A$ is said to \emph{span} $\langle A \rangle_r$.  The set $A$ is called \emph{closed} if{f} $\langle A \rangle_r = A$ and is said to \emph{percolate} if{f} $\langle A \rangle_r = V(G)$.  
  
The class of $r$-neighbour bootstrap processes were first introduced and investigated by Chalupa, Leath, and Reich~\cite{CLR79} as a monotone model of the dynamics of ferromagnetism.

While the focus of study for such processes is often the behaviour of initially infected sets that are chosen at random, a number of natural extremal problems arise.  For any graph $G$ and $r \geq 2$, define the size of the smallest percolating set to be
\[
m(G, r) = \min\{|A| \mid A \subseteq V(G),\ \langle A \rangle_r = V(G)\}.
\]

One class of graphs that have received a great deal of attention in this area are the square grids.  For any $n$ and $d$, let $[n]^d$ denote the $d$-dimensional $n \times n \times \cdots \times n$ grid.  In the case that $r = 2$, for all $n$ and $d$, the quantity $m([n]^d, 2)$ is known exactly (see~\cite{BB06} and \cite{BBM10}).  Pete (see~\cite{BP98}) gave a number of general results about the smallest percolating sets in grids with other thresholds and observed that $m([n]^d, d) = n^{d-1}$.  In the case of hypercubes, $Q_d = [2]^d$, Morrison and Noel~\cite{MN15} confirmed a conjecture of Balogh and Bollob\'{a}s~\cite{BB06}, showing that for each fixed $r$, $m(Q_d, r) = \frac{(1+o(1))}{r} \binom{d}{r-1}$. 

 Minimum percolating sets in trees were investigated by Riedl~\cite{eR10}.

The size of minimum percolating sets in regular graphs have been examined by Coja-Oghlan, Feige, Krivelevich and Reichman~\cite{C-ORKR} who gave bounds on $m(G, r)$ in a number of different cases in which $G$ is a regular graph satisfying various expansion properties.  Bounds on the size of a minimum percolating set (or `contagious set') in both binomial random graphs and random regular graphs have been given by Feige, Krivelevich, and Reichman~\cite{FKR16} and Guggiola and Semerjian~\cite{GS15}.

 Extremal problems for more general `$\mathcal{H}$-bootstrap processes' were considered by Balogh, Bollob\'{a}s, Morris, and Riordan~\cite{BBMR12} and many other natural extremal problems have been examined including the largest minimal percolating sets~\cite{rM09} and the `percolation time'~\cite{BP13, BP15, mP12}.

In this note, we shall focus on the conditions for the minimum degree of a graph that imply the existence of a percolating set of the smallest possible size.  It is clear that for any graph on at least $r$ vertices, $m(G, r) \geq r$.  Throughout, $\delta(G)$ is used to denoted the minimum degree of a graph $G$.

Considering the degree sequence of a graph, Reichman~\cite{dR12} showed that for any any graph $G$ and threshold $r$, then 
\[
m(G, r) \leq \sum_{v \in V(G)} \min\left\{1, \frac{r}{\deg(v) + 1} \right\}.
\]
  For any $d \geq r-1$, this upper bound is achieved by disjoint copies of cliques on $d+1$ vertices.

Freund, Poloczek, and Reichman~\cite{FPR15} showed that if $G$ is a graph on $n$ vertices with $\delta(G) \geq \left\lceil \frac{(r-1)}{r}n \right\rceil$, then $m(G, r) = r$.  Furthermore, they gave the example for odd $r$ of a clique on $n = r+1$ vertices with a perfect matching deleted.  No set of size $r$ percolates in such a graph and the minimum degree is $n-2 = \left\lfloor \frac{r-1}{r}n \right\rfloor$.  In the special case of $r = 2$, it is noted in \cite{FPR15} that for any $n$, a graph consisting of two disjoint cliques on $\lfloor n/2 \rfloor$ vertices and $\lceil n/2 \rceil$ vertices has minimum degree $\left\lfloor \frac{(2-1)}{2}n\right\rfloor - 1$ and no set of size $2$ that percolates in $2$-neighbour bootstrap percolation. Though it is not stated in their paper, the proof idea in~\cite{FPR15} can be used, with a small extra check, to show that for $n$ sufficiently large, and $\delta(G) \geq \lfloor n/2 \rfloor$, then $m(G, 2) = 2$.  

Freund, Poloczek, and Reichman~\cite{FPR15} further investigated Ore-type degree conditions for a graph that guarantee that $m(G, 2) = 2$.  Defining $\sigma_2(G)$ to be the minimum sum of degrees of non-adjacent vertices in $G$, they showed that for a graph on $n \geq 2$ vertices,  if $\sigma_2(G) \geq n$, then $m(G, 2) =2$.  Recently, Dairyko, Ferrara, Lidick\'{y}, Martin, Pfender, and Uzzell~\cite{DFLMPU16} improved this result showing that, except for a list of exceptional graphs that they completely characterized, if $\sigma_2(G) \geq n-2$, then $m(G, 2) > 2$.  Their results show that the only graph with $\delta(G) = \lfloor |V(G)|/2 \rfloor$ and $m(G, 2) > 2$ is the $5$-cycle.

The examples showing the tightness of results on the minimum degree in \cite{FPR15} are only given for a small value of $n$ depending on $r$.  When $r \geq 3$ and the number of vertices is large relative to $r$, a different picture emerges and, in fact, when $n$ is large, any graph on $n$ vertices with a minimum degree that exceeds $n/2$ by some constant that depends on $r$ will have a set of size $r$ that percolates in $r$-neighbour bootstrap percolation.  The main result of this paper is the following.

\begin{theorem}\label{thm:main-ub}
For any $r \geq 4$ and $n$ sufficiently large, if $G$ is a graph on $n$ vertices with $\delta(G) \geq \lfloor n/2 \rfloor +(r-3)$, then $m(G, r) = r$.
\end{theorem}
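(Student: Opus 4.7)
I plan a two-stage argument: a \emph{finishing lemma} saying any sufficiently large infected set automatically percolates, together with a \emph{seed construction} whose closure meets the finishing threshold.

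\textbf{Finishing lemma.} For any $I\subseteq V(G)$ with $|I|\ge n-\delta(G)+r-1$, each $x\notin I$ has at least $\delta(G)-(n-1-|I|)\ge r$ neighbours in $I$, so $x$ is infected in one step and iteratively $\langle I\rangle_r=V(G)$. Under the hypothesis $\delta(G)\ge\lfloor n/2\rfloor+(r-3)$ this threshold reduces to $|I|\ge\lceil n/2\rceil+2$, only marginally above $n/2$.

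\textbf{Seed construction.} Summing $\binom{\deg v}{r}$ over $v\in V(G)$, convexity produces an $r$-set $A=\{v_1,\dots,v_r\}$ whose common neighbourhood $C=\bigcap_{i=1}^{r}N(v_i)$ satisfies $|C|\ge n(\delta(G)/n)^r-o(n)\ge n/2^r-o(n)$. Every vertex of $C$ is adjacent to all of $A$ and so is infected at time one, giving $|A_1|\ge r+|C|$.

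\textbf{Spreading.} At time two I double-count $\sum_{x\in V(G)}|N(x)\cap C|=\sum_{c\in C}\deg(c)\ge|C|\delta(G)$; since any $x$ with fewer than $r$ neighbours in $C$ contributes at most $r-1$, the number of such ``bad'' vertices is at most $|C|(n-\delta(G))/(|C|-r+1)$, which is $(n-\delta(G))+O_r(1)$. Hence $|A_2|\ge\delta(G)-O_r(1)$, approximately $n/2$. A further bootstrap step, exploiting the structural observation that if $V\setminus A_2$ were too large then the counting above would force it to be nearly a clique, pushes the infected set past $\lceil n/2\rceil+2$ and the finishing lemma takes over.

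\textbf{Main obstacle.} The delicate point is closing the $O_r(1)$-sized last gap between $|A_2|\approx n/2-O_r(1)$ and the finishing threshold $\lceil n/2\rceil+2$. The averaging bound $|C|\ge n/2^r$ is essentially sharp — in the extremal two-clique example one can have $|C|=\Omega(n)$ while still no $r$-set percolates — so merely enlarging $|C|$ cannot succeed. My plan is a stability dichotomy: either the graph contains a ``dense core'' in which a refined choice of $A$ yields $|C|$ genuinely linear in $n$, or the residual set $V\setminus A_2$ is so close to a clique that its internal minimum degree together with the hypothesis $\delta(G)\ge\lfloor n/2\rfloor+(r-3)$ forces a ``bridge'' vertex providing one extra infected neighbour to each resistant vertex. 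The exponential-in-$r$ constants in the counting argument are what force the assumption ``$n$ sufficiently large in terms of $r$''.
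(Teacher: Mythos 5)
The ``finishing lemma'' is correct and is precisely the paper's Lemma~\ref{lem:big-sets-perc} with $k=r-3$: once a set of size $\lceil n/2\rceil+2$ is infected, everything follows in one more step. The averaging/double-counting steps are also sound as far as they go, but they stop exactly where the theorem gets hard, and the ``stability dichotomy'' offered to finish is not a proof but a wish list. Numerically, with $|C|\ge cn$ the good-vertex bound gives $n-|B|\ge\delta(G)-(r-1)/c$, so even in the best possible case $c\approx 1/2$ you lose roughly $2(r-1)$, leaving $|A_2|\gtrsim \lfloor n/2\rfloor - r-1$, well short of the finishing threshold $\lceil n/2\rceil+2$; and when $|C|$ is only $\Theta(n/2^r)$ the loss is exponential in $r$. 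So neither branch of your dichotomy closes: the ``dense core'' branch still misses the target by $\Theta_r(1)$, and the ``bridge vertex'' branch is unargued. What you have actually shown is that the closure of a well-chosen $r$-set reaches roughly $n/2$; the real content of Theorem~\ref{thm:main-ub} is to rule out the closed sets of size very near $n/2$ that the extremal constructions in Section~\ref{sec:construction} show do exist just one unit below the degree threshold.

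The paper handles exactly this gap with a sharper tool: Proposition~\ref{prop:no-mid-size-closed} is an edge-counting argument showing that under $\delta(G)\ge \lfloor n/2\rfloor+(r-3)$, every closed set has size $\le 2(r-1)$ or lies in the narrow window $[\lfloor n/2\rfloor-1,\lceil n/2\rceil+1]$. Lemma~\ref{lem:complete-bip-r} (via a $K_{r,r-1}$ subgraph) produces an $r$-set spanning more than $2(r-1)$ vertices, so its closure is in the window or is everything. Then Propositions~\ref{prop:vsmall-half-r}--\ref{prop:vbig-half-r} treat each of the four possible sizes in that window separately, extracting detailed structural information (both halves are nearly cliques, cross-degrees are pinned down exactly) and exhibiting, case by case, a different $r$-set whose closure strictly exceeds $\lceil n/2\rceil+1$ and hence percolates. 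That case analysis is the irreducible hard core of the proof; it is precisely what your sketch defers, so as written the proposal does not prove the theorem.
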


The result for the case $r = 3$ is slightly different than the rest and is, perhaps, closer to the behaviour of the case $r = 2$ examined in~\cite{FPR15}.

\begin{theorem}\label{thm:ub-3}
For any $n \geq 30$, any graph $G$ on $n$ vertices with $\delta(G) \geq \lfloor n/2 \rfloor + 1$ satisfies $m(G, 3) = 3$.
\end{theorem}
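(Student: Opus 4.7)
I would produce a percolating $3$-set by combining a structural ``no-stall'' observation with an averaging argument for the initial infection.  If $B\subsetneq V(G)$ is closed under the $3$-neighbour rule, every $v\notin B$ has at most two neighbours in $B$ and therefore at least $\delta(G)-2\ge\lfloor n/2\rfloor-1$ neighbours in $V\setminus B$.  Since these fit among $|V\setminus B|-1$ other vertices, $|V\setminus B|\ge\lfloor n/2\rfloor$ and hence $|B|\le\lceil n/2\rceil$.  Thus it is enough to find a $3$-set whose closure eventually has more than $\lceil n/2\rceil$ vertices.

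\textbf{Starting triple.}  A double-counting identity gives
\[
\sum_{\{a,b,c\}\subseteq V(G)}|N(a)\cap N(b)\cap N(c)| \;=\; \sum_{v\in V(G)}\binom{\deg(v)}{3} \;\ge\; n\binom{\lfloor n/2\rfloor+1}{3},
\]
so dividing by $\binom{n}{3}$ yields an average common neighbourhood of size more than $3$ once $n\ge 30$; some triple $\{a,b,c\}$ therefore has $|T(a,b,c)|:=|N(a)\cap N(b)\cap N(c)|\ge 4$, giving $|A_1|\ge 7$.

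\textbf{Propagation and main obstacle.}  Refining the edge count above yields $|B|(\delta(G)+3-|B|)\le 2n$ for every closed proper $B$, since the degree sum over $B$ is bounded by twice the internal edges plus at most $2(n-|B|)$ cross edges.  For $\delta(G)=\lfloor n/2\rfloor+1$ the resulting quadratic in $|B|$ forces $|B|$ either below a small constant (ruled out by $|A_1|\ge 7$) or essentially at $\lceil n/2\rceil$ with every intermediate estimate close to equality.  In the ``well-mixed'' case where no near-threshold closed proper subset exists, the process starting from the triple of the averaging step grows monotonically until $|A_t|>\lceil n/2\rceil$ and percolates by the first observation.  In the tight extremal case, the equalities pin $G$ down to two near-cliques of size about $n/2$ joined by an almost $2$-regular bipartite graph, and I would construct a percolating triple by hand: taking a vertex $a$ of one near-clique together with one of its cross-neighbours $b$ in the opposite near-clique and another vertex $c$ in that same opposite near-clique, the second cross-neighbour of $a$ lies in $N(a)\cap N(b)\cap N(c)$, after which at the next time step the entire opposite near-clique (being adjacent to the three infected vertices on its own side) becomes infected and $|A_2|$ exceeds $\lceil n/2\rceil$.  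The principal obstacle is this structural case analysis: one must verify that every graph attaining near-equality in the closed-set bound still admits an explicit percolating $3$-set, and the cutoff $n\ge 30$ enters at the averaging step where $|T|\ge 4$ is first guaranteed.
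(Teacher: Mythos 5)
Your high-level architecture matches the paper's: bound the size of closed sets from above by $\lceil n/2\rceil$ using the minimum degree, use edge-counting on a closed set to get a quadratic inequality forcing $|B|$ to be either tiny or close to $n/2$, produce a starting triple whose closure is not tiny, and then handle the half-sized closed-set case by structural analysis. Your method for the starting triple (averaging to get a $K_{3,4}$) is a routine variation on the paper's $K_{2,3}$ argument and is fine.

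The genuine gap is in the ``tight extremal case.'' You reduce to a closed set $A$ with $|A|\in\{\lfloor n/2\rfloor,\lceil n/2\rceil\}$, but the construction you describe relies on two things that you do not establish and that in fact fail in some sub-cases. First, you need the opposite side $A^c$ to become entirely infected once it contains three infected vertices; this requires $G[A^c]$ to be a clique (or nearly so, handled carefully), and that holds only when every vertex of $A^c$ has exactly two neighbours in $A$ --- when $n$ is odd and $|A|=\lfloor n/2\rfloor$, a vertex of $A^c$ can have as few as one $A$-neighbour and therefore $G[A^c]$ need not be complete. Second, and more seriously, your count $|A_2|\ge|A^c|+1$ is only $\lfloor n/2\rfloor+1=\lceil n/2\rceil$ when $n$ is odd and $|A|=\lceil n/2\rceil$, which is exactly \emph{not} enough to trigger your large-set percolation observation. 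The paper splits precisely here into two separate propositions (one for $|A|=\lfloor n/2\rfloor$, one for $|A|=\lceil n/2\rceil$) with further sub-cases depending on whether $A$ contains vertices with at least three cross-neighbours, the set you would call $A_3$; the $|A_3|=1$ sub-case in the $\lceil n/2\rceil$ proposition in particular requires a separate argument (infecting a well-chosen vertex so that a previously closed set is enlarged by one and then applying the big-set lemma). Your sketch covers the simplest of these sub-cases (no such vertices, $n$ even, $G[A^c]$ complete) and needs to be extended along these lines before it becomes a proof.
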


In both Theorem~\ref{thm:main-ub} and Theorem~\ref{thm:ub-3}, no attempt has been made to optimize the possible  lower bounds on $n$.

While it remains true that a graph consisting of two disjoint cliques of size $\lfloor n/2\rfloor$ and $\lceil n/2 \rceil$ will have no set of size $r$ that percolates in $r$-neighbour bootstrap percolation, for large $n$, graphs with larger minimum degree exist with no small percolating sets.  In Section~\ref{sec:construction}, examples are given of graphs on $n$ vertices with $\delta(G) = \lfloor n/2 \rfloor$ and $m(G, 3) > 3$ and for every $r \geq 4$, examples of graphs with $\delta(G) = \lfloor n/2 \rfloor + (r-4)$ and $m(G, r) > r$.   These examples show that Theorem~\ref{thm:main-ub} and Theorem~\ref{thm:ub-3} are sharp. 

Throughout, the following notation is used.  Given two disjoint sets of vertices $A$ and $B$ in a graph $G$, let $e(A, B)$ denote the number of edges with one endpoint in $A$ and the other in $B$.  The subgraph of $G$ induced by the set $A$ is denoted by $G[A]$ and given two disjoint sets $A$ and $B$, let $G[A, B]$ denote the bipartite subgraph consisting of all the edges in $G$ with one endpoint in $A$ and the other in $B$.  Given a set $A$ and a vertex $x$, let $\deg_A(x)$ be the number of neighbours of $x$ in the set $A$.  The neighbourhood of a vertex $x$ in $G$ is denoted $N(x)$.

The remainder of the paper is organized as follows.  In Section~\ref{sec:construction}, we give the classes of graphs that show the sharpness of Theorem~\ref{thm:main-ub} and Theorem~\ref{thm:ub-3}.  In Section~\ref{sec:extremal}, it is shown that for all large graphs satisfying the degree conditions of Theorem~\ref{thm:main-ub} or Theorem~\ref{thm:ub-3}, every closed set is either relatively small, consists of around half the vertices, or is the set of all vertices.  Using the existence of small complete bipartite subgraphs, it is shown that there is always a set of $r$ vertices whose closure is not too small.  In Section~\ref{sec:large-closed}, it is shown that graphs with closed sets consisting of nearly half the vertices are highly structured and that this structure can be exploited to find a percolating set of size $r$.  Finally, in Section~\ref{sec:open}, some further open problems are given.

\section{Graphs with no small percolating sets}\label{sec:construction}

The graphs described in this section showing the sharpness of Theorem~\ref{thm:main-ub} and Theorem~\ref{thm:ub-3} consist of two disjoint cliques, with a regular (or nearly-regular when the number of vertices is odd) bipartite graph between them.

\begin{theorem}\label{thm:const-r}
For $r \geq 4$, let $n \geq 2(r-1)$ be even and suppose that $H$ is any $(r-3)$-regular bipartite graph with no $4$-cycles on parts $A$ and $B$ of size $n/2$ each.  The graph $G$ consisting of $H$ together with a clique on the vertices of $A$ and a clique on the vertices of $B$ has $\delta(G) = n/2 + (r-4)$ and $m(G, r) > r$.
\end{theorem}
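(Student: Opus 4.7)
The degree statement is immediate: each vertex of $A$ has $n/2 - 1$ neighbours via the clique on $A$ and exactly $r - 3$ in $B$ via $H$, giving $\deg(v) = n/2 + r - 4$, and similarly for $B$, so $\delta(G) = n/2 + (r-4)$. To establish $m(G, r) > r$, the plan is to fix any $S \subseteq V(G)$ with $|S| = r$, write $a = |S \cap A|$ and $b = |S \cap B|$ with $a + b = r$, and show $\langle S \rangle_r \ne V(G)$ in two stages.

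\emph{Stage 1 (degree reduction).} Each vertex has only $r - 3$ $H$-neighbours, so every $v \in A \setminus S$ has at most $r - 3$ infected $B$-neighbours at any time and therefore needs at least three infected $A$-neighbours to become infected. At the moment some vertex of $A \setminus S$ is first infected, the only infected $A$-vertices are $S \cap A$ of size $a$, forcing $a \geq 3$. Thus if $a \leq 2$, no vertex of $A \setminus S$ is ever infected, $|\langle S \rangle_r \cap A| = a < n/2$, and $S$ does not percolate; symmetrically if $b \leq 2$. This disposes of $r \in \{4, 5\}$, where $a + b \leq 5$ forces $\min(a, b) \leq 2$.

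\emph{Stage 2 ($r \geq 6$, $a, b \geq 3$).} Here the no-$4$-cycle hypothesis drives the argument. A step-$1$ $A$-infection $v \in A \setminus S$ needs $\deg_H(v, S \cap B) \geq b$, i.e., $v$ adjacent in $H$ to every element of $S \cap B$; since $b \geq 2$, no-$4$-cycles forces such a $v$ to be unique, which I denote $V^*$ when it lies in $A \setminus S$, and symmetrically $W^* \in B \setminus S$. After step $1$, $|A_1 \cap A| \leq a + 1 \leq r - 2$ and $|B_1 \cap B| \leq b + 1 \leq r - 2$. The key claim is that the process essentially halts here. A step-$2$ $A$-infection $v \in A \setminus A_1$ would require $\deg_H(v, B_1) \geq r - |A_1 \cap A| \geq b - 1$; but when $V^*$ exists, every pair inside $S \cap B$ has common neighbour $V^* \in A_1$ by no-$4$-cycles, so any $v \ne V^*$ has $\deg_H(v, S \cap B) \leq 1$, giving $\deg_H(v, B_1) \leq 2 < b - 1$ for $b \geq 4$. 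In the boundary case $b = 3$ the identity $N_H(W^*) = S \cap A$ (since $|N_H(W^*)| = r - 3 = a$ and $S \cap A \subseteq N_H(W^*)$) forces the only remaining candidate into $A_1$. Parallel arguments handle the subcases where $V^*$ or $W^*$ fails to exist (or lies in $S$), possibly with one further iteration using the $\leq b - 3$ extra vertices of $N_H(W^*) \setminus (S \cap A)$. In every case $|\langle S \rangle_r \cap A|$ stays bounded by a constant depending only on $r$, which for $n \geq 2(r-1)$ is strictly less than $n/2$; hence $\langle S \rangle_r \ne V(G)$.

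The main obstacle will be the step-$2$ analysis in the asymmetric subcases — for instance when $V^*$ exists but $W^*$ does not — since no-$4$-cycles alone does not immediately block every candidate; one must combine it with the $(r-3)$-regularity of $H$ and the containment $S \cap A \subseteq N_H(W^*)$ to funnel surviving candidates back into the current infected set, possibly iterating a bounded number of times before the closure stabilises.
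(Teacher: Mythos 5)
Your opening stages match the paper's argument closely: the degree computation, the observation that $\min(a,b)\leq 2$ blocks one side entirely (which disposes of $r\in\{4,5\}$), and the use of no-$4$-cycles to force at most one step-$1$ infection in each class, are exactly the paper's ideas. Your Stage~2 analysis when \emph{both} $V^*$ and $W^*$ exist is also correct, including the boundary case $b=3$ via $N_H(W^*)=S\cap A$, and it recovers the paper's ``two vertices infected at the first time step'' case for all $r\geq 6$.

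The gap, which you yourself flag, is real and not small. When only one of $V^*,W^*$ exists --- say $V^*\in A$ but no vertex of $B\setminus S$ sees all of $S\cap A$ --- the process does not necessarily halt after one step: a vertex $y\in N_H(V^*)\setminus(S\cap B)$ adjacent to exactly $a-1$ vertices of $S\cap A$ can become infected at step $2$, and one must then show step $3$ produces nothing new. This requires a fresh $C_4$-argument: the paper shows that a second step-$2$ candidate in $B$ would share two common $A$-neighbours with $y$, and then bounds the infected-neighbour count of every uninfected vertex at step $3$ using the fact that $y$ and $V^*$ are $H$-adjacent (so two infected $B$-neighbours of an $A$-vertex always close a $4$-cycle through $V^*$). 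Your proposal gestures at ``parallel arguments, possibly with one further iteration'' but does not supply them, and the expression you write down, $N_H(W^*)\setminus(S\cap A)$, is for the vertex $W^*$ that does not exist in this subcase --- you presumably meant $N_H(V^*)\setminus(S\cap B)$, which has $a-3$ elements and is the correct pool, but the slip signals that this branch was not actually worked through. Likewise the closing claim that ``$|\langle S\rangle_r\cap A|$ stays bounded by a constant depending only on $r$'' is the desired conclusion, not an argument; one must show $A_{t+1}=A_t$ at some explicit finite $t$, which is what the paper's step-$3$ count delivers. To complete the proof you should carry out the one-sided case exactly as the symmetric one: show the unique step-$2$ $B$-infection $y$, then verify for step $3$ that every $A$-vertex outside $A_2$ has at most $a+1$ infected $A$-neighbours and at most $1$ infected $B$-neighbour (two would force a $C_4$ through $V^*$), and every $B$-vertex outside $A_2$ has at most $b+1$ infected $B$-neighbours and at most $2$ infected $A$-neighbours (using that $y$ already occupies $a-1$ of the $A$-neighbours of any competitor), so the count never reaches $r$.
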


\begin{proof}
As the graph $G$ is $(n/2+(r-4))$-regular, it remains only to show that no set of $r$ vertices percolates.

Let $X$ be any initially infected set of $r$ vertices in $G$ and set $|X\cap A| = k$.  Note that since every vertex in $A$ has $r-3$ neighbours in $B$ and every vertex in $B$ has $r-3$ neighbours in $A$, then if either $k \leq 2$ or $r-k \leq 2$, some vertices in one partition set will never have $r$ infected neighbours, even if all vertices in the other partition set are infected.

We first use this observation to deal with some of the small values of $r$.
For $r \in \{4, 5\}$, if $k \geq 3$, then $r - k \leq r-3 \leq 2$.  Thus, in the cases $r = 4$ or $r=5$, it is immediate that $X$ does not percolate and so $m(G, r) > r$.

Next, consider the case that $r =6$.  By the previous observation and relabelling $A$ and $B$ if necessary, assume that $3 \leq k \leq r-k \leq r-3$, so that we have $3 = k = r-k$.  If anything further is infected by $X$, say $a \in A$, then $a$ must be adjacent to all $3$ elements of $X \cap B$.  Since $H$ contains no copies of $C_4$, no other vertices in $A$ can be adjacent to all elements of $X\cap B$ and so there is at most one such $a \in A$.  

If $a$ is the only vertex infected at time $1$, then no vertex in $B$ is adjacent to all elements of $X \cap A$ (or else it would have been infected in the first time step) and the only vertices adjacent to $a$ are those in $X \cap B$, which are already infected.  Thus, nothing further is infected.

If two vertices are infected at the first time step, it can only be that one $a \in A$ and one $b \in B$ are infected.  That is, $a$ is adjacent to all elements in $X \cap B$ and $b$ is adjacent to all elements in $X \cap A$.  At the second time step, any further vertex in $A$ is adjacent to $4$ infected vertices in $A$, but not $b$ and at most one from $X \cap B$ and so does not become infected.  Similarly, nothing in $B$ that is not already infected has more than $5$ infected neighbours.  Thus, $X$ does not percolate and so $m(G, 6) > 6$.

Now we consider the most general case: $r \geq 7$.  As above, let $X$ be any set of $r$ vertices in $G$, set $k = |X \cap A|$ and assume that $3 \leq k, r-k \leq r-3$.

\begin{center}
\begin{figure}[htb]
	$(a)$\begin{tikzpicture}[scale = 0.5]
	\tikzstyle{vertex}=[circle, draw=black,  minimum size=4pt,inner sep=0pt]
	
	\filldraw[draw = black, fill = gray!20] (0, 2.5) ellipse (1cm and 3.5cm);
	\filldraw[draw = black, fill = gray!20] (4, 2.5) ellipse (1cm and 3.5cm);
	\node at (0, -1.5) {$A$};
	\node at (4, -1.5) {$B$};
	
	\draw (0, 4.25) ellipse (0.5cm and 1.25cm); 
	\node at (-2.3, 4.25) {$X \cap A$};
	
	\draw (4, 4) ellipse (0.5cm and 1.5cm);
	\node at (6.3, 4) {$X \cap B$};
	
	\node[vertex, fill = black] at (0, 5.25) (a1) {};
	\node[vertex, fill = black] at (0, 4.75) (a2) {};
	\node at (0, 4.3) {\small $\vdots$};
	\node[vertex, fill = black] at (0, 3.75) (a3) {};
	\node[vertex, fill = black] at (0, 3.25) (a4) {};
	
	\node[vertex, fill = white, label=below:{$x$}] at (0, 0) (x) {};
	
	\node[vertex, fill = black] at (4, 5.25) (b1) {};
	\node[vertex, fill = black] at (4, 4.75) (b2) {};
	\node[vertex, fill = black] at (4, 4.25) (b3) {};
	\node at (4, 3.9) {\small $\vdots$};
	\node[vertex, fill = black] at (4, 3.25) (b4) {};
	\node[vertex, fill = black] at (4, 2.75) (b5) {};
	
	\draw (4, 1) ellipse (0.5cm and 1cm);
	\node at (5.5, 1) {$N_x$};
	\node[vertex, fill = white, label=below:{$y$}] at (4, 1) (y) {};
	\draw (x) -- (y);
	\draw (x) -- (3.9, 1.98);
	\draw (x) -- (4, 0);
	
	\draw (x) -- (b1);
	\draw (x) -- (b2);
	\draw (x) -- (b3);
	\draw (x) -- (b4);
	\draw (x) -- (b5);
	\draw (x) -- (3.7, 3.5);
	
	\draw[dashed] (y) -- (a1);
	\draw (y) -- (a2);
	\draw (y) -- (a3);
	\draw (y) -- (a4);
	\draw (y) -- (0.25, 4);
	
	\draw (0, 1.75) ellipse (0.5cm and 1 cm);
	\draw (y) -- (0.1, 2.73);
	\draw (y) -- (0, 0.75);
	
	
\end{tikzpicture} 
$(b)$ \begin{tikzpicture}[scale=0.5]
	\tikzstyle{vertex}=[circle, draw=black,  minimum size=4pt,inner sep=0pt]
	
	\filldraw[draw = black, fill = gray!20] (0, 2.5) ellipse (1cm and 3.5cm);
	\filldraw[draw = black, fill = gray!20] (4, 2.5) ellipse (1cm and 3.5cm);
	\node at (0, -1.5) {$A$};
	\node at (4, -1.5) {$B$};
	
	\draw (0, 4.25) ellipse (0.5cm and 1.25cm); 
	\node at (-2.3, 4.25) {$X \cap A$};
	
	\draw (4, 4) ellipse (0.5cm and 1.5cm);
	\node at (6.3, 4) {$X \cap B$};

%
	
	\node[vertex, fill = black] at (0, 5.25) (a1) {};
	\node[vertex, fill = black] at (0, 4.75) (a2) {};
	\node at (0, 4.3) {\small $\vdots$};
	\node[vertex, fill = black] at (0, 3.75) (a3) {};
	\node[vertex, fill = black] at (0, 3.25) (a4) {};
	
	\node[vertex, fill=white, label=below:{$x$}] at (0, 0) (x) {};
	
	\node[vertex, fill = black] at (4, 5.25) (b1) {};
	\node[vertex, fill = black] at (4, 4.75) (b2) {};
	\node[vertex, fill = black] at (4, 4.25) (b3) {};
	\node at (4, 3.9) {\small $\vdots$};
	\node[vertex, fill = black] at (4, 3.25) (b4) {};
	\node[vertex, fill = black] at (4, 2.75) (b5) {};
	
	\draw (4, 1.5) ellipse (0.5cm and 0.75cm);
	\node at (4, 1.5) {$N_x$};
	\node[vertex, fill=white, label=below:{$y$}] at (4, 0) (y) {};
	\draw (x) -- (3.8, 2.19);
	\draw (x) -- (4, 0.75);
	
	\draw (x) -- (b1);
	\draw (x) -- (b2);
	\draw (x) -- (b3);
	\draw (x) -- (b4);
	\draw (x) -- (b5);
	\draw (x) -- (3.7, 3.5);
	
	\draw (y) -- (a1);
	\draw (y) -- (a2);
	\draw (y) -- (a3);
	\draw (y) -- (a4);
	\draw (y) -- (0.25, 4);
	
	\draw (0, 1.75) ellipse (0.5cm and 1 cm);
	\node at (0, 1.75) {$N_y$};
	\draw (y) -- (0.15, 2.71);
	\draw (y) -- (0, 0.75);
	
\end{tikzpicture}

	\caption{Possible structures for the set $X$ when $r \geq 7$ if $(a)$ one vertex is infected in the first time step or $(b)$ two vertices are infected in the first time step.  Shaded regions represent cliques.}\label{fig:Ext-example-r7}
\end{figure}
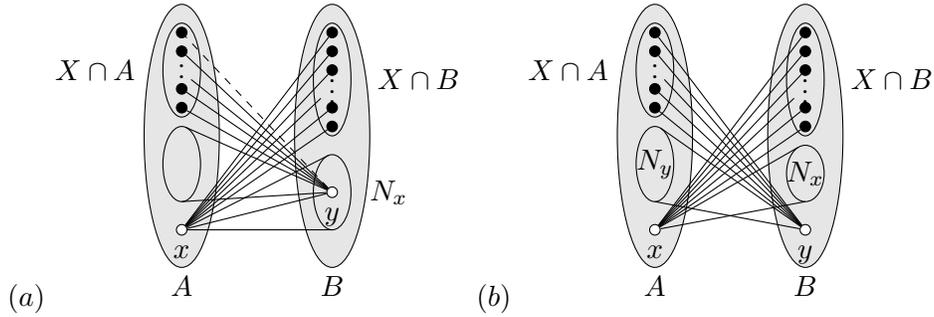
\end{center}

First suppose that only vertices in one partition set, say $A$, are infected at the first time step.  Since $H$ has no copy of $C_4$, there can be only one such vertex $x$ adjacent to all $r-k$ vertices in $X \cap B$.  At the second time step, no vertex in $A$ can be infected as it would have to have $r-k-1 \geq 2$ neighbours in $X \cap B$, which would create a $C_4$ with $x$.  Any vertex in $B$ that is infected at time $2$ is in $N(x) \cap B \setminus X$.  Set $N_x = N(x) \cap B \setminus X$ and note that $|N_x| = k-3$.  If $k = 3$, then no further vertices are infected and the process stops.  If $k \geq 4$ and $y \in N_x$ is infected at the second time step, then $y$ has exactly $k-1$ neighbours in $X \cap A$ and there can only be one such vertex since a second would have two common neighbours with $y$ in $A$.  See Figure~\ref{fig:Ext-example-r7} $(a)$.  At time step $3$, any vertex in $A$ has at most $k+1$ infected neighbours in $A$ and at most $1$ infected neighbour in $B$ (since two would create a $C_4$ with $x$ in $H$).  Any vertex in $B$ has at most $(r-k) + 1$ infected neighbours in $B$ and is either adjacent to $x$ and at most one vertex from $X \cap A \setminus N(y)$ or else at most two vertices from $X\cap A$.  Thus, any uninfected vertex is adjacent to at most $\max\{k+2, r-k+3\}$ infected vertices.  Since $k \geq 4$, then $\max\{k+2, r-k+3\} \leq r-1$ and so no further vertices are infected.

Next, suppose that $x \in A$ and $y \in B$ are both infected at the first time step.  Without loss of generality, assume that $3 \leq k \leq r-k \leq r-3$.  As before, there can be only one vertex in each partition set that is infected in the first time step.  Set $N_x = B \cap N(x) \setminus X$ and $N_y = A \cap N(y) \setminus X$ so that $|N_x| = k-3$ and $|N_y| = r-k-3$.  See Figure~\ref{fig:Ext-example-r7} $(b)$.  At time step $2$, any vertex in $N_y$ is adjacent to $k+1$ infected vertices in $A$ at most $2$ vertices in $B$ ($y$ and at most one from $X\cap B$).  Since $k+1+2 = k+3 \leq r-1$, then such a vertex is not infected.  If $k = 3$, then there are no vertices in $N_x$ and so any vertex in $B$ has at most $r-k+2 \leq r-1$ infected neighbours and so is not infected.  If $k \geq 4$, then any vertex in $N_x$ is adjacent to at most $(r-k) + 1$ infected neighbours in $B$ and at most $2$ in $A$ since it can have at most one neighbour in $X \cap A$.  Since $r-k+3 \leq r-1$, then such a vertex is not infected.


As the set $X$ was arbitrary and in all cases, the bootstrap process halts with not all vertices infected, $m(G, r) > r$.

\end{proof}

Note that a graph $H$ with girth at least $6$ as required by Theorem~\ref{thm:const-r} is given with positive probability by taking a random $(r-3)$-regular graph on two vertex sets of size $n/2$ as long as $n$ is sufficiently large (see, for example, \cite{nW78} and \cite{nW99}).

\begin{corollary}\label{cor:const-r}
For every $r \geq 4$ and $n$ sufficiently large, there is a graph $G$ on $n$ vertices with $\delta(G) = \lfloor n/2 \rfloor + (r-4)$ with $m(G, r) > r$.
\end{corollary}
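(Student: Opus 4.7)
The plan is to split on the parity of $n$. For even $n$ sufficiently large, the corollary follows immediately from Theorem~\ref{thm:const-r} combined with the remark following it: a random $(r-3)$-regular bipartite graph on two parts of size $n/2$ has girth at least $6$ with positive probability once $n$ is large enough, so such an $H$ exists, and the associated graph $G$ satisfies $\delta(G) = n/2 + (r-4) = \lfloor n/2 \rfloor + (r-4)$ and $m(G, r) > r$.

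For odd $n$, I would reduce to the even case by a vertex-deletion argument. Let $G^*$ be the graph from Theorem~\ref{thm:const-r} on $n+1$ vertices, with partition classes $A^*$ and $B^*$ of size $(n+1)/2$ each, a clique on each part, and an $(r-3)$-regular bipartite graph of girth $\geq 6$ between them. Pick any $v \in A^*$ and set $G = G^* - v$. A short calculation shows that vertices in $A^* \setminus \{v\}$ have degree $(n-3)/2 + (r-3) = \lfloor n/2 \rfloor + (r-4)$, the $r-3$ former bipartite neighbours of $v$ inside $B^*$ have degree $(n-1)/2 + (r-4) = \lfloor n/2 \rfloor + (r-4)$, and all remaining vertices of $B^*$ have degree $\lfloor n/2 \rfloor + (r-3)$. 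Hence $\delta(G) = \lfloor n/2 \rfloor + (r-4)$, as required.

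To obtain $m(G, r) > r$, I would use the monotonicity that, since $G$ is an induced subgraph of $G^*$, any vertex infected under the process in $G$ starting from $X$ is also infected under the process in $G^*$ starting from the same $X$, so $\langle X \rangle_r^G \subseteq \langle X \rangle_r^{G^*}$ for every $X \subseteq V(G)$. By Theorem~\ref{thm:const-r} applied to $G^*$, for any $r$-element $X \subseteq V(G^*)$ the closure $\langle X \rangle_r^{G^*}$ is not all of $V(G^*)$. It therefore suffices to check that, whenever $X \subseteq V(G)$, the uninfected set $V(G^*) \setminus \langle X \rangle_r^{G^*}$ contains some vertex $w \neq v$; such a $w$ lies in $V(G)$ and remains uninfected there, proving $m(G, r) > r$. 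The only mildly delicate point — and the step I expect to be the main obstacle — is justifying this last claim, but inspection of the proof of Theorem~\ref{thm:const-r} shows that in every sub-case the uninfected set has $\Omega(n)$ vertices (most of $A^*$, most of $B^*$, or everything outside a small explicitly-bounded extra infected set), so for $n$ sufficiently large it certainly contains a vertex different from the chosen $v$.
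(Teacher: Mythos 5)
Your decomposition by parity, the vertex-deletion reduction for odd $n$, and the monotonicity inclusion $\langle X \rangle_r^G \subseteq \langle X \rangle_r^{G^*}$ are all exactly the paper's approach (the paper phrases the monotonicity in contrapositive form: if $X$ percolated in $G$, it would percolate in $G^*$). Your degree calculation is correct; by the $A^*$--$B^*$ symmetry of $G^*$ it is immaterial which part $v$ is taken from, even though the paper happens to delete from the other side.

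The ``mildly delicate point'' you flag has a much cleaner resolution than re-inspecting the casework of Theorem~\ref{thm:const-r}. Observe that $V(G^*) \setminus \{v\}$ cannot be a closed set in $G^*$: the vertex $v$ lies in a clique of size $(n+1)/2 \geq r+1$, so $v$ has at least $r$ neighbours inside $V(G^*) \setminus \{v\}$, and if every other vertex were infected then $v$ would become infected as well. Since $\langle X \rangle_r^{G^*}$ is always a closed set, $\langle X \rangle_r^{G^*} \neq V(G^*) \setminus \{v\}$, and the uninfected set automatically contains some $w \neq v$ whenever $X$ fails to percolate in $G^*$. This one-line structural observation is exactly what the paper uses (``the additional vertex is joined to at least $r$ neighbours in $B$''), and it avoids the appeal to the internal branches of the earlier proof, which is a somewhat brittle move: a reader checking your argument would need to re-verify that every sub-case of that proof leaves $\Omega(n)$ vertices uninfected, a fact which the statement of Theorem~\ref{thm:const-r} by itself does not promise.
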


\begin{proof}
If $n$ is even, let $G$ be given by Theorem~\ref{thm:const-r}.  If $n$ is odd, let $G_1$ be the graph on $n+1$ vertices  given by Theorem~\ref{thm:const-r} and define a graph $G$ by deleting one vertex from $G_1$.  The vertices of $G$ are partitioned into a set, $A$, of size $\lceil n/2 \rceil$ that form a clique and a set, $B$, of size $\lfloor n/2 \rfloor$ that each form a clique.  Vertices in $A$ have degree at least $\lceil n/2 \rceil - 1 + (r-3) - 1 = \lfloor n/2 \rfloor + (r-4)$ while vertices in $B$ have degree exactly $\lfloor n/2 \rfloor -1 + (r-3) = \lfloor n/2 \rfloor + (r-4)$.  If $G$ had a percolating set of size $r$ for $r$-neighbour bootstrap percolation, then this same set would percolate in $G_1$ since the additional vertex is joined to at least $r$ neighbours in $B$.  As this would contradict the fact that $m(G_1, r) > r$, then $m(G, r) > r$ also.
\end{proof}

The case $r = 3$ has a different behaviour than larger values of $r$.  The proof that the example has no small percolating sets is closely related to the corresponding proofs for $r \in \{4, 5\}$.

\begin{theorem}\label{thm:const-3}
For any even $n \geq 4$, let $A=[1, n/2]$ and $B = [n/2+1, n]$ and let $G$ be the graph given by a complete graph on $A$, a complete graph on $B$ and a perfect matching between $A$ and $B$.  Then, $\delta(G) = n/2$ and $m(G, 3) > 3$.
\end{theorem}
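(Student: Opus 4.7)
The plan is to establish the minimum degree directly and then rule out percolation by a short case analysis on $k := |X \cap A|$, exploiting the fact that the perfect matching gives each vertex exactly one neighbour on the opposite side.

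First, every vertex in $A$ has $n/2-1$ neighbours in the clique on $A$ together with a single matching neighbour in $B$, giving degree exactly $n/2$, and the same holds for vertices in $B$. So $\delta(G)=n/2$ is immediate.

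For the percolation statement, let $X \subseteq V(G)$ with $|X|=3$ and set $k = |X \cap A|$. By the symmetry that swaps $A$ and $B$, I may assume $k \in \{2,3\}$. The key structural observation driving both cases is that any vertex $v$ has exactly one neighbour in the opposite part (its matching partner), so at every time step $v$ can receive at most one infected neighbour from across the matching.

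In the case $k=3$, every vertex $a \in A \setminus X$ already has all three vertices of $X$ as infected clique-neighbours, so at time $1$ all of $A$ is infected; on the other hand each $b \in B$ has only its matching partner as a neighbour in $A$ (hence at most one infected $A$-neighbour) and no infected neighbour in $B$, so $B$ receives nothing at time $1$, and since no new vertex of $A$ can be added the process halts with closure $A \subsetneq V(G)$ (using $n \geq 4$).

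In the case $k=2$, write $X = \{a_1,a_2,b_1\}$. A vertex $a \in A \setminus \{a_1,a_2\}$ already has two infected clique-neighbours and becomes infected at time $1$ iff its matching partner is $b_1$; so either $b_1$ is matched to $\{a_1,a_2\}$ and nothing new is infected at time $1$, or else one extra vertex $a_3 \in A$ is infected at time $1$ and then every remaining vertex of $A$ sees three infected $A$-neighbours and all of $A$ is infected at time $2$. In either sub-case, a vertex $b \in B \setminus \{b_1\}$ has at most two infected neighbours at any time: the single matching partner in $A$ (even once all of $A$ is infected) together with $b_1$ from its $B$-clique. Hence no vertex of $B \setminus \{b_1\}$ is ever infected, the closure $\langle X \rangle_3$ is contained in $A \cup \{b_1\} \subsetneq V(G)$, and $X$ fails to percolate. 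Since $X$ was arbitrary, $m(G,3) > 3$.

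There is no serious obstacle here; the argument is entirely a matter of bookkeeping. The only thing worth being careful about is the sub-case of $k=2$ in which $b_1$ is matched into $X\cap A$, so that the supposed third $A$-vertex does not exist — but this case makes the closure even smaller, so it only helps.
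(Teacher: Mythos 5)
Your proof is correct and rests on the same key observation as the paper's: since $|X|=3$, one side of the bipartition contains at most one vertex of $X$, and any unseeded vertex on that side can see at most two infected neighbours (the at-most-one seeded vertex on its side plus its single matching partner), so it never reaches threshold. The paper phrases this more tersely by conditioning directly on the minority side ($|X\cap A|\le 1$) and noting that even if all of $B$ were infected, vertices of $A\setminus X$ still have at most $2$ infected neighbours; you condition on the majority side and track the spread of infection through $A$, which adds case analysis but is substantively identical.
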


\begin{proof}
Let $X$ be any set of $3$ vertices in $G$.  Note that either $|X \cap A| \leq 1$ or else $|X \cap B| \leq 1$.  Suppose, without loss of generality that $|X \cap A| \leq 1$.  Even if every vertex in $B$ becomes infected, any uninfected vertex in $A$ has at most $2$ infected neighbours: any vertex in $X \cap A$ and the single neighbour in $B$.  Thus, these vertices never become infected and so $X$ does not percolate. 
\end{proof}

Using the same argument as that given in the proof of Corollary~\ref{cor:const-r} extends Theorem~\ref{thm:const-3} to all $n \geq 4$.

\begin{corollary}
For any $n \geq 4$, there exists a graph $G$ with $\delta(G) = \lfloor n/2 \rfloor$ and $m(G, 3) > 3$.
\end{corollary}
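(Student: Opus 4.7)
The plan is to handle the two parities of $n$ separately, using Theorem~\ref{thm:const-3} directly when $n$ is even and mimicking the vertex-deletion argument from the proof of Corollary~\ref{cor:const-r} when $n$ is odd.

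When $n \geq 4$ is even, Theorem~\ref{thm:const-3} itself supplies a graph $G$ on $n$ vertices with $\delta(G) = n/2 = \lfloor n/2 \rfloor$ and $m(G,3) > 3$, so there is nothing to prove in this case.

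When $n \geq 5$ is odd, I would let $G_1$ be the graph on $n+1$ vertices furnished by Theorem~\ref{thm:const-3}, with parts $A_1$ and $B_1$ of common size $(n+1)/2$, each inducing a clique, joined by a perfect matching. Pick any vertex $v \in A_1$ and set $G = G_1 - v$, writing $A = A_1 \setminus \{v\}$ of size $\lfloor n/2 \rfloor$ and $B = B_1$ of size $\lceil n/2 \rceil$. A direct degree count shows that each vertex of $A$ loses exactly one clique neighbour (namely $v$) and keeps its matching partner, giving degree $|A|-1+1 = \lfloor n/2 \rfloor$; the matching partner of $v$ in $B$ loses only $v$, giving degree $|B|-1 = \lfloor n/2 \rfloor$; and every other vertex of $B$ retains degree $|B|-1+1 = \lceil n/2 \rceil$. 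Hence $\delta(G) = \lfloor n/2 \rfloor$.

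To verify $m(G,3) > 3$, suppose for contradiction that some $3$-set $X \subseteq V(G)$ percolates in $G$. Since $G$ is the subgraph of $G_1$ induced on $V(G_1) \setminus \{v\}$, running the $3$-neighbour bootstrap process in $G_1$ from $X$ infects everything in $V(G_1) \setminus \{v\}$. At that stage $v$ has all $(n+1)/2 - 1 \geq 2$ of its clique neighbours in $A_1 \setminus \{v\}$ infected together with its matching partner in $B_1$, so at least $(n+1)/2 \geq 3$ infected neighbours; thus $v$ becomes infected and $X$ percolates in $G_1$, contradicting Theorem~\ref{thm:const-3}. The only potential pitfall is the degree bookkeeping after the deletion (and the mild requirement $n \geq 5$ ensuring $v$ picks up three infected neighbours when reinstated), both of which are immediate once the two parts are written down.
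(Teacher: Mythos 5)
Your proof is correct and follows exactly the approach the paper intends: for even $n$ cite Theorem~\ref{thm:const-3} directly, and for odd $n$ delete a vertex from the $(n+1)$-vertex example, check degrees, and observe that a percolating $3$-set in $G$ would percolate in $G_1$ once the deleted vertex (with at least $3$ infected neighbours) is reinstated. The paper proves this corollary simply by referencing the vertex-deletion argument from Corollary~\ref{cor:const-r}, which is precisely what you have spelled out.
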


Note that the graph described in Theorem~\ref{thm:const-3} was also used \cite{FPR15} where it was called $DC_n$ and it was noted, in relation to $2$-neighbour bootstrap percolation, that this graph has sets of size $2$ whose closure is of size $n/2$, while there are other sets of size two that percolate.

This concludes the descriptions of constructions and in the subsequent sections, it is shown that large graphs with minimum degree one larger (for a fixed $n$ and $r$) than those in Theorems~\ref{thm:const-r} and \ref{thm:const-3} do have small percolating sets.  No attempt has been made here to classify the extremal examples.

\section{Sets with large closure}\label{sec:extremal}

Before proceeding to the proofs of the main theorems, we give a number of results about the size of the closures of sets in $r$-neighbour bootstrap percolation.  In particular, the goal is to show that the closures of \emph{any} set in graphs satisfying the minimum degree conditions under consideration can only can only have a small number of different sizes.

The following straightforward lemma uses the minimum degree condition to show that any large set will percolate.  This will be used throughout in arguments to come.

\begin{lemma}\label{lem:big-sets-perc}
For any $r \geq 3$, $k \geq 1$, let $G$ be a graph on $n$ vertices with $\delta(G) \geq \lfloor n/2 \rfloor + k$.  Every set $A \subseteq V(G)$ with $|A| \geq \left\lceil \frac{n}{2} \right\rceil +(r-k-1)$ satisfies $\langle A \rangle_r = V(G)$.
\end{lemma}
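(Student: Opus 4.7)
The plan is a direct counting argument applied at the very first time step. I will show that the hypothesis $|A| \ge \lceil n/2 \rceil + (r-k-1)$ forces every vertex outside $A$ to have at least $r$ neighbours in $A$, so that $A_1 = V(G)$ and hence $\langle A \rangle_r = V(G)$ already after one update.

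The key step is the following estimate. Fix an arbitrary vertex $v \in V(G) \setminus A$. Since $v \notin A$, the set of vertices available to be non-$A$ neighbours of $v$ is $V(G) \setminus (A \cup \{v\})$, which has size
\[
n - |A| - 1 \;\le\; n - \left\lceil \tfrac{n}{2} \right\rceil - (r-k-1) - 1 \;=\; \left\lfloor \tfrac{n}{2} \right\rfloor - r + k.
\]
Combining this with the minimum-degree hypothesis,
\[
|N(v) \cap A| \;=\; \deg(v) - |N(v) \setminus A| \;\ge\; \left\lfloor \tfrac{n}{2} \right\rfloor + k \;-\; \left(\left\lfloor \tfrac{n}{2} \right\rfloor - r + k\right) \;=\; r.
\]

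Because this inequality holds for every $v \in V(G) \setminus A$, every such vertex satisfies the update rule at time step $1$, so $A_1 = V(G)$, giving $\langle A \rangle_r = V(G)$ as required.

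There is really no obstacle here: the statement is a one-step consequence of the pigeonhole-style inequality $|N(v) \cap A| \ge \deg(v) - (n - |A| - 1)$, and the only thing to watch is the interplay between $\lfloor n/2 \rfloor$ and $\lceil n/2 \rceil$, which works out exactly so that the $+k$ in the degree hypothesis and the $+(r-k-1)$ in the size hypothesis combine to give precisely $r$. In the write-up I would simply present the displayed inequality and remark that it holds uniformly for every $v \notin A$, making no further bootstrap iterations necessary.
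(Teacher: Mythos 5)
Your argument is correct and is essentially the same as the paper's proof: both fix a vertex $v \notin A$, bound $\deg_{A^c}(v) \le |A^c|-1 = n-|A|-1$, and subtract from the minimum-degree hypothesis to conclude $\deg_A(v) \ge r$, so that $A^c$ is infected in a single step. The only cosmetic difference is that you substitute the lower bound on $|A|$ into the bound on non-$A$ neighbours first, whereas the paper substitutes it at the end; the arithmetic is identical.
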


\begin{proof}
For every $x \in A^c$, since $x$ has at most $|A^c|-1 = n-|A|-1$ neighbours within $A^c$, then
\begin{align*}
\deg_A(x) &= \deg(x) - \deg_{A^c}(x)\\
		&\geq \left\lfloor \frac{n}{2} \right\rfloor + k - n + |A|+1\\
		&\geq \left\lfloor \frac{n}{2} \right\rfloor + k - n + \left(\left\lceil \frac{n}{2} \right\rceil +r-k-1 \right) + 1\\
		&=r.
\end{align*}
Thus, as every vertex in $A^c$ has at least $r$ neighbours in $A$, if the set $A$ is initially infected, the remainder of the graph becomes infected in one time step.
\end{proof}

There are two different cases for the choice of $k$ in Lemma~\ref{lem:big-sets-perc} used here.  In the case $r = 3$ with $k = 1$, this lemma states that if $\delta(G) \geq \lfloor n/2 \rfloor +1$, then any set of size $\lceil n/2 \rceil +1$ percolates.  For all $r \geq 4$, taking $k = r-3$, the lemma shows that for $\delta(G) = \lfloor n/2 \rfloor +r-3$, any set of size $\lceil n/2 \rceil +2$ percolates.

In the following proposition, we consider large graphs with a given minimum degree condition.   Edge-counting is used to show that any set that is closed is either relatively small or else contains nearly half of the vertices of the graph.  This includes the possibility that the set percolates.

\begin{proposition}\label{prop:no-mid-size-closed}
Let $r \geq 3$, set $k  = \max\{1, r-3\}$ and let $G$ be a graph on $n$ vertices with $n \geq 10r$ and $\delta(G) \geq \lfloor n/2 \rfloor + k$.  If $A \subseteq V(G)$ is such that $\langle A \rangle_r = A$, then either $|A| \leq 2(r-1)$ or else $|A| \geq \lfloor n/2 \rfloor - \min\{1, r-3\}$.
\end{proposition}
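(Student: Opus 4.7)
The plan is an edge double-count between $A$ and its complement. Write $a = |A|$, $m = \lfloor n/2 \rfloor$, $k = \max\{1, r-3\}$ (so the hypothesis reads $\delta(G) \geq m + k$), and $\epsilon = \min\{1, r-3\}$. The goal is to rule out the ``middle'' integer range $2r-1 \leq a \leq m - \epsilon - 1$, since everything else is covered by the two alternatives in the statement.

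Since $\langle A\rangle_r = A$, every $v \in A^c$ satisfies $\deg_A(v) \leq r-1$, giving the upper bound $e(A, A^c) \leq (r-1)(n-a)$. On the other hand, each $v \in A$ has $\deg(v) \geq m+k$ and at most $a-1$ neighbours inside $A$, so $\deg_{A^c}(v) \geq m+k-a+1$, giving the lower bound $e(A, A^c) \geq a(m+k-a+1)$. Chaining these two bounds and rearranging produces the quadratic inequality $q(a) \geq 0$, where
\[
q(a) \;:=\; a^2 - (m+k+r)\,a + (r-1)n.
\]
So it suffices to exhibit strict negativity of $q$ on every integer $a$ in the forbidden range.

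The decisive structural observation is that the sum of the roots of $q$ equals $m+k+r$, and in each of the two cases this matches the sum of the two endpoints of the forbidden range: for $r=3$ one has $5 + (m-1) = m+4$, and for $r \geq 4$ one has $(2r-1) + (m-2) = m+2r-3$. Thus the endpoints $2r-1$ and $m-\epsilon-1$ are symmetric about the axis of the upward-opening parabola $q$, so $q(2r-1) = q(m-\epsilon-1)$ and on the entire closed interval between them $q$ is maximized at the endpoints. It is therefore enough to verify the single inequality $q(2r-1) < 0$.

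Expanding $q(2r-1) = (2r-1)(r-1-k-m) + (r-1)n$ and substituting the relevant values of $k$ and $n \in \{2m, 2m+1\}$ reduces the problem, in each of the four resulting cases, to a linear inequality of the form $-m + O(r) < 0$; the worst case is $r \geq 4$ and $n = 2m+1$, which simplifies to $5r - 3 - m < 0$, and this is strict under the hypothesis $n \geq 10r$ (so $m \geq 5r$). There is no real obstacle beyond the bookkeeping of separating the $r=3$ and $r \geq 4$ subcases (where $k$, $\epsilon$, and the matching endpoints differ) and both parities of $n$; the slack in $n \geq 10r$ comfortably absorbs the constant terms.
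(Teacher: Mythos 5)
Your proof is correct and follows essentially the same route as the paper: double-count $e(A, A^c)$ to obtain the quadratic inequality $q(a) \ge 0$, then show $q$ is negative throughout the forbidden integer range. Your observation that the two endpoints $2r-1$ and $\lfloor n/2\rfloor - \min\{1,r-3\} - 1$ are symmetric about the axis of $q$ (since $k + \min\{1,r-3\} = r-2$ in both cases) is a nice streamlining: it reduces the verification to the single evaluation $q(2r-1) < 0$, whereas the paper separately computes the sign of the same quadratic at $2r-2$, $2r-1$, $\lfloor n/2\rfloor - 2$, $\lfloor n/2\rfloor - 1$, and, for $r=3$, at $\lfloor n/2\rfloor$.
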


\begin{proof}
Let $A$ be a set of vertices with $\langle A \rangle_r = A$ and set $|A| = \ell$.  The proof proceeds by counting the edges with one endpoint in $A$ and the other in $A^c$ in two different ways.

Since any vertex in $A$ has at most $\ell-1$ neighbours within the set $A$, any $x \in A$ has at least $\delta(G) - \ell+1$ neighbours in the set $A^c$.  Thus,
\begin{equation}\label{eq:lb-cross-edges}
	e(A, A^c) = \sum_{x \in A} \deg_{A^c}(x) \geq \ell(\delta(G) - \ell+1) \geq \ell(\lfloor n/2 \rfloor - \ell + k+1).
\end{equation}
On the other hand, since $\langle A \rangle_r = A$, every vertex in $A^c$ can have at most $r-1$ neighbours in the set $A$.  Thus,
\begin{equation}\label{eq:ub-cross-edges}
	e(A, A^c)  = \sum_{x \in A^c} \deg_A(x) \leq (r-1)|A^c| = (r-1)(n-\ell).
\end{equation}

Combining the inequalities \eqref{eq:lb-cross-edges} and \eqref{eq:ub-cross-edges} and rearranging gives that
\begin{equation}\label{eq:edge-balance}
0 \leq \ell^2 - \ell\left(\left\lfloor \frac{n}{2} \right\rfloor + k+r \right) + (r-1)n.
\end{equation}

Define $D(\ell) = \ell^2 - \ell\left(\lfloor n/2 \rfloor + k+r \right) + (r-1)n$, which is the righthand side of inequality \eqref{eq:edge-balance}.  Substituting $\ell=2r-1$ into $D(\ell)$ gives
\begin{align*}
D(2r-1)	&=(2r-1)^2 - (2r-1)\lfloor n/2 \rfloor -(2r-1)(k+r) + n(r-1)\\
		&= (2r-1)(r-k-1) + (r-1) \left(n - 2 \left\lfloor \frac{n}{2} \right\rfloor \right) - \left\lfloor \frac{n}{2} \right\rfloor\\
		&\leq (2r-1)(r-k-1) + (r-1) - \left\lfloor \frac{n}{2} \right\rfloor\\
		&=
		\begin{cases}	
			7 - \left\lfloor \frac{n}{2} \right\rfloor 	&\text{if $r = 3$}\\
			5r-3 - \left\lfloor \frac{n}{2} \right\rfloor &\text{if $r \geq 4$}
		\end{cases}\\	
		&< 0
\end{align*}
since for $n \geq 10r-4$, then $\lfloor n/2 \rfloor > 5r-3$.  Furthermore, substituting $\ell = 2r-2$ gives, for all $n$,
\begin{align*}
D(2r-2)	&=(2r-2)^2 - 2(r-1)\lfloor n/2 \rfloor - (2r-2)(k+r) + n(r-1)\\
		&= (2r-2)(r-k-2) + (r-1)\left(n - 2 \left\lfloor \frac{n}{2}\right\rfloor \right) \geq 0.
\end{align*}

Similarly, substituting $\ell = \lfloor n/2 \rfloor -2$ gives
\begin{align*}
D(\lfloor n/2 \rfloor -2)
	&=\lfloor n/2 \rfloor^2 - 4\lfloor n/2 \rfloor + 4 - \lfloor n/2 \rfloor^2 - (k+r-2)\lfloor n/2 \rfloor\\
	& \qquad + 2(k+r) + n(r-1)\\
	&=(r-1)\left( n - 2 \lfloor n/2 \rfloor \right) - (k-r+4)\lfloor n/2 \rfloor + 2(k+r+2)\\
	&\leq 2k+3r+3 - \lfloor n/2 \rfloor < 0
\end{align*}
for $n \geq 10r$.  Next consider the result of substituting $\ell = \lfloor n/2 \rfloor -1$,
\begin{align*}
D(\lfloor n/2 \rfloor -1)
	&=\lfloor n/2 \rfloor^2 - 2\lfloor n/2 \rfloor + 1 - \lfloor n/2 \rfloor^2 - (k+r-1)\lfloor n/2 \rfloor\\
	& \qquad + (k+r) + n(r-1)\\
	&=n(r-1) - (k+r+1)\lfloor n/2 \rfloor + (k+r+1)\\
	&=\begin{cases}
		2n - 5\lfloor n/2 \rfloor + 5		&\text{if $r = 3$}\\
		(r-1)\left(n - 2\lfloor n/2 \rfloor\right) + 2r-2	&\text{if $r \geq 4$}.
	\end{cases}
\end{align*}
Thus, when $r = 3$,  and $n \geq 16$, $D(\lfloor n/2 \rfloor -1)< 0$ whereas for $r \geq 4$ and all $n$, we have $D(\lfloor n/2 \rfloor -1) \geq 0$.  Finally, consider $D(\lfloor n/2 \rfloor)$ in the case that $r = 3$:
\begin{align*}
D(\lfloor n/2 \rfloor)
	&=\lfloor n/2 \rfloor^2 - \lfloor n/2 \rfloor^2 - 4\lfloor n/2 \rfloor + 2n\\
	&=2\left(n - 2\lfloor n/2 \rfloor \right) \geq 0.
\end{align*}

Note that $D$ is a quadratic function in $\ell$ with a unique minimum and satisfying $D(2r-2) \geq 0$, $D(2r-1) < 0$.  When $r = 3$, since $D(\lfloor n/2 \rfloor - 1) < 0$ and $D(\lfloor n/2 \rfloor) \geq 0$, then if $D(\ell) \geq 0$, then either $\ell \leq 4$ or else $\ell \geq \lfloor n/2 \rfloor$.  When $r \geq 4$, since $D(\lfloor n/2 \rfloor - 2) < 0$ and $D(\lfloor n/2 \rfloor - 1) \geq 0$, then if $D(\ell) \geq 0$, either $\ell \leq 2(r-1)$ or else $\ell \geq \lfloor n/2 \rfloor -1$.  This completes the proof.
\end{proof}

In summary, Lemma~\ref{lem:big-sets-perc} and Proposition~\ref{prop:no-mid-size-closed} together show that if $G$ is a graph on $n$ vertices with $\delta(G) \geq \lfloor n/2 \rfloor + \max\{1, r-3\}$, then any set of $r$ vertices either percolates, spans a set of size at most $2(r-1)$ or else spans a set of cardinality close to $n/2$.

In order to address the existence of small closed sets of vertices in the graph, note that for $r$ fixed and $n$ large enough, the K\"{o}vari-S\'{o}s-Tur\'{a}n theorem~\cite{KST54} implies that a graph on $n$ vertices with minimum degree $\delta(G) \geq \lfloor n/2 \rfloor + (r-3)$ contains complete bipartite subgraphs of the form $K_{r, r-1}$ which give a subgraph on $2r-1$ vertices with $m(K_{r, r-1}, r) = r$.  For the sake of completeness, the following pair of lemmas with standard proofs make this precise.

\begin{lemma}\label{lem:complete-bip-3}
For $n \geq 6$, if $G$ is a graph on $n$ vertices with $\delta(G) \geq \lfloor n/2 \rfloor +1$, then any vertex of $G$ is contained in a copy of $K_{2, 3}$.
\end{lemma}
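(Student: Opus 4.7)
The plan is to locate, for an arbitrary vertex $v \in V(G)$, a partner vertex $u \neq v$ sharing at least three common neighbours with $v$; any three vertices $w_1, w_2, w_3 \in N(u) \cap N(v)$ together with $\{u, v\}$ then furnish a $K_{2,3}$ containing $v$ as required.  Since $u \notin N(u)$ and $v \notin N(v)$, the five vertices $u, v, w_1, w_2, w_3$ are automatically distinct and all six edges $u w_i$, $v w_j$ lie in $G$, so the whole game is to produce the partner $u$.

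I would produce such a $u$ by a simple inclusion-exclusion, splitting on whether or not $v$ has a non-neighbour.  If $u \in V(G) \setminus N[v]$, then neither $u$ nor $v$ lies in $N(u) \cup N(v)$, so this union is contained in a set of size $n-2$, giving
\[
|N(u) \cap N(v)| \;\geq\; |N(u)| + |N(v)| - (n-2) \;\geq\; 2\left(\left\lfloor \frac{n}{2} \right\rfloor + 1\right) - (n-2) \;\geq\; 3,
\]
where the final inequality uses $2\lfloor n/2 \rfloor \geq n-1$.  The complementary case is $N(v) = V(G) \setminus \{v\}$; here any $u \in N(v)$ satisfies $N(u) \cap N(v) = N(u) \setminus \{v\}$, which has size $\deg(u) - 1 \geq \lfloor n/2 \rfloor$, and this is at least $3$ as soon as $n \geq 6$.

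The only point that requires vigilance is the parity of $n$ in the inclusion-exclusion bound: for $n$ even it yields $4$ common neighbours with room to spare, while for $n$ odd it is tight and gives exactly the required $3$.  This tightness is precisely why the lemma cannot afford a weaker hypothesis than $\delta(G) \geq \lfloor n/2 \rfloor + 1$.  Beyond this check, the argument is entirely routine and I anticipate no serious obstacle.
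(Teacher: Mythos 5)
Your proof is correct and takes essentially the same approach as the paper: the same case split on whether the given vertex has a non-neighbour, the same inclusion-exclusion bound $|N(u) \cap N(v)| \geq 2(\lfloor n/2\rfloor+1)-(n-2) \geq 3$ in the non-complete case, and the same degree count in the case where the vertex is adjacent to everything.
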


\begin{proof}
Let $x$ be any vertex in $G$.  If $x$ is adjacent to all other vertices, then for any $y \neq x$, the common neighbourhood of $x$ and $y$ has at least $\lfloor n/2 \rfloor \geq 3$ vertices and these together with $x$ and $y$ form a copy of $K_{2, 3}$.  Otherwise, let $z$ be any non-neighbour of $x$.  Then the common neighbourhood of $x$ and $z$ has at least $2(\lfloor n/2 \rfloor + 1) - (n-2) = 2\lfloor n/2 \rfloor - n + 4 \geq 3$ vertices and these together with $x$ and $z$ form a copy of $K_{2, 3}$.
\end{proof}

\begin{lemma}\label{lem:complete-bip-r}
For each $r \geq 3$ and $n \geq (r-1)2^{r-1} +4$, if $G$ is a graph on $n$ vertices with $\delta(G) \geq \lfloor n/2 \rfloor + (r-3)$, then $G$ contains a copy of $K_{r, r-1}$.
\end{lemma}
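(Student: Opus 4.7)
The plan is to use the standard Kővári–Sós–Turán double-counting argument, specialized to the hypothesized minimum degree. Assume for contradiction that $G$ contains no copy of $K_{r,r-1}$. Count pairs
\[
P = \bigl|\{(v, S) : v \in V(G),\ S \subseteq N(v),\ |S| = r-1\}\bigr|
\]
in two ways.

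Counting by $v$ first and using $\deg(v) \geq \lfloor n/2 \rfloor + (r-3)$ for every $v$, each vertex contributes at least $\binom{\lfloor n/2 \rfloor + r-3}{r-1}$ pairs, so
\[
P \;\geq\; n\binom{\lfloor n/2 \rfloor + r-3}{r-1}.
\]
Counting by $S$ first, the key observation is that if some $(r-1)$-subset $S \subseteq V(G)$ satisfies $|\{v : S \subseteq N(v)\}| \geq r$, then $r$ such vertices together with $S$ form a copy of $K_{r, r-1}$, contradicting our standing assumption. Hence each $S$ is counted at most $r-1$ times, giving
\[
P \;\leq\; (r-1)\binom{n}{r-1}.
\]
Combining the two bounds yields the inequality
\[
n\binom{\lfloor n/2 \rfloor + r-3}{r-1} \;\leq\; (r-1)\binom{n}{r-1},
\]
and the plan is to derive a contradiction from this whenever $n \geq (r-1)2^{r-1} + 4$.

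To verify the contradiction, cancel $(r-1)!$ from both sides; the inequality to refute becomes
\[
n\prod_{i=0}^{r-2}\bigl(\lfloor n/2 \rfloor + r-3 - i\bigr) \;\leq\; (r-1)\prod_{i=0}^{r-2}(n-i).
\]
Pairing factors of equal index $i$, each ratio $(\lfloor n/2 \rfloor + r-3-i)/(n-i)$ is, using $\lfloor n/2 \rfloor \geq (n-1)/2$, at least $\tfrac{1}{2}\bigl(1 + \tfrac{2r-7-i}{n-i}\bigr)$, which for $n$ large relative to $r$ is essentially $1/2$. Thus the left side is of order $n\cdot(n/2)^{r-1}$ while the right side is of order $(r-1)n^{r-1}$, and the inequality fails once $n > (r-1)2^{r-1}$ up to lower-order corrections; the additive slack ``$+4$'' in the hypothesis is what absorbs the correction arising from the floor function and from the single smallest factor on the left, $\lfloor n/2 \rfloor - 1$.

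The main obstacle is purely the numerical bookkeeping for the threshold: the Kővári–Sós–Turán double-counting is routine, but one has to verify that $(r-1)2^{r-1}+4$ is actually large enough when the $r-1$ approximate factors of $1/2$ are multiplied together, particularly since the last factor $\lfloor n/2 \rfloor - 1$ is slightly below $n/2$. No delicate structural argument seems to be required beyond this arithmetic check.
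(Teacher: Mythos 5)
Your approach is identical to the paper's: the same double count of pairs $(v,S)$ with $S\subseteq N(v)$ and $|S|=r-1$, the same lower bound $n\binom{\lfloor n/2\rfloor+r-3}{r-1}$ from the degree condition, and the same pigeonhole observation that if every $(r-1)$-set has at most $r-1$ common neighbours then the count is at most $(r-1)\binom{n}{r-1}$. That is the whole combinatorial content of the lemma, and you have it.

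What you leave open is precisely the arithmetic you flag at the end, and there the sketch needs a small correction. You suggest pairing the $i$-th factor $\lfloor n/2\rfloor+r-3-i$ with $n-i$ and treating each ratio as ``essentially $1/2$,'' attributing the loss entirely to the single smallest factor $\lfloor n/2\rfloor-1$. But the ratio you compute is $\tfrac{1}{2}\bigl(1+\tfrac{2r-7-i}{n-i}\bigr)$, and $2r-7-i<0$ for all $i$ in range once $r=3$, and for two of the three indices when $r=4$; so for small $r$ more than one factor dips below $1/2$ and the ``single smallest factor'' heuristic is not literally right, though the product still works out. The paper sidesteps this by a slightly different matching: after replacing $\lfloor n/2\rfloor$ by $(n-1)/2$, it bounds the first $r-2$ factors $\tfrac{n-1}{2}+r-3-i$ from below by $\tfrac{n-1-i}{2}$ (valid since $r\geq 3$) and takes the last factor exactly as $\tfrac{n-3}{2}$, yielding the clean chain $n\binom{\lfloor n/2\rfloor+r-3}{r-1}\geq \tfrac{n-3}{2^{r-1}}\binom{n}{r-1}>(r-1)\binom{n}{r-1}$ whenever $n\geq (r-1)2^{r-1}+4$, which is exactly where the stated threshold comes from. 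So your plan is sound; just carry out the product bound carefully rather than by the ``all ratios are $1/2$'' shortcut, or copy the paper's factor-by-factor matching.
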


\begin{proof}
The proof proceeds by counting copies of stars of the form $K_{1, r-1}$.  Define the set
\[
S = \{(x, A) \mid x \in V(G),\ |A| = r-1,\ A \subseteq N(x)\}.
\]
Then, counting elements of $S$ by the first coordinate, as long as $\lfloor n/2 \rfloor + (r-2) \geq r-1$, then
\begin{align*}
|S|	&=\sum_{x \in V} \binom{\deg(x)}{r-1} \\
	& \geq \sum_{x \in V} \binom{\lfloor n/2 \rfloor + (r-3)}{r-1}\\
	& = n \binom{\lfloor n/2 \rfloor + (r-3)}{r-1}\\
	& \geq \frac{n}{(r-1)!}\left(\frac{n-1}{2} + (r-3)\right)\cdots \left(\frac{n-1}{2}+1\right)\left(\frac{n-1}{2}\right)\left(\frac{n-1}{2} - 1\right)\\
	&\geq \frac{n}{(r-1)!} \cdot \frac{(n-1)(n-2) \cdots (n-r+2)}{2^{r-1}} \cdot (n-3)\\
	&\geq \frac{n-3}{2^{r-1}} \binom{n}{r-1} > (r-1) \binom{n}{r-1}.
\end{align*}
As there are $\binom{n}{r-1}$ possible choices for the second coordinate of elements of $S$, by the pigeonhole principle, there is a set $A \subseteq V(G)$ of $r-1$ vertices with at least $r$ common neighbours.  These $r$ vertices, together with $A$ contain a copy of $K_{r, r-1}$ in the graph.
\end{proof}

Thus, when $n$ is sufficiently large, any graph on $n$ vertices with minimum degree $\lfloor n/2 \rfloor + (r-3)$ has a set of size $r$ whose span contains at least $2r-1$ vertices and hence by Proposition~\ref{prop:no-mid-size-closed}, contains at least $\lfloor n/2 \rfloor -1$ vertices.

What remains to show is that if such a graph contains a set $A$ with around half the vertices in $G$ and $\langle A \rangle_r = A$, then $G$ contains some set of size $r$ that percolates.

\section{Structure of large closed sets}\label{sec:large-closed}

In this section, we show that if a graph on $n$ vertices has minimum degree $\lfloor n/2 \rfloor + \max\{r-3, 1\}$ and a set $A$ with $\langle A \rangle_r = A$ and $A$ has close to half the vertices of $G$, then enough structural information about $G$ can be deduced to show that there is some set of size $r$ that percolates, completing the proof of Theorem~\ref{thm:main-ub}.  As the minimum degree conditions for the case $r = 3$ are different from all others, these are dealt with separately.

Before proceeding with these results, a straightforward lemma is recorded to be used repeatedly.  If the minimum degree of a graph is large enough, not only is there a set of $r$ vertices that percolates in $r$-neighbour bootstrap percolation, but, in fact, any set of $r$ vertices will percolate.

\begin{lemma}\label{lem:all-sets-perc}
Let $k \geq 0$, $r \geq 3$ and $n \geq k(r+1)-1$.  For any graph $G$ on $n$ vertices with $\delta(G) \geq n-k$ and any set $A \subseteq V(G)$ of $r$ vertices, $\langle A \rangle_r = V(G)$.
\end{lemma}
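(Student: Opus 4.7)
The hypothesis $\delta(G) \geq n - k$ is equivalent to saying every vertex of $G$ has at most $k-1$ non-neighbors, a very strong structural condition that should drive infection through the graph quickly. My plan is to run the $r$-neighbour bootstrap process starting from $A_0 = A$ and argue in two stages: a ``finishing'' stage and a ``growth'' stage. The finishing stage is immediate: whenever $|A_t| \geq r + k - 1$, every $v \notin A_t$ has at most $k-1$ non-neighbors in total, so $\deg_{A_t}(v) \geq |A_t| - (k-1) \geq r$, and the process infects all remaining vertices in a single step. So it suffices to show that, starting from $|A_0| = r$, the set $A_t$ grows by at least one new vertex at each step until it reaches size $r + k - 1$.

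For the growth stage, I would fix $\ell$ with $r \leq \ell \leq r + k - 2$, set $A = A_t$ with $|A| = \ell$, and suppose for contradiction that no new vertex is infected at time $t+1$. Then every $v \notin A$ has $\deg_A(v) \leq r - 1$, and hence at least $\ell - r + 1$ non-neighbors in $A$. Double-counting the pairs $(x, v)$ with $x \in A$, $v \notin A$, and $xv \notin E(G)$ yields
\[
(n - \ell)(\ell - r + 1) \;\leq\; \ell(k - 1),
\]
where the lower bound sums, over $v \notin A$, the number of non-neighbors of $v$ in $A$, and the upper bound uses that each $x \in A$ has at most $k - 1$ non-neighbors in total and hence at most $k - 1$ non-neighbors in $V(G) \setminus A$.

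It then remains to derive a contradiction from this inequality for every $\ell$ in the range $[r, r + k - 2]$, using only the hypothesis $n \geq k(r+1) - 1$. Set $f(\ell) = (n - \ell)(\ell - r + 1) - \ell(k - 1)$; this is a quadratic in $\ell$ with negative leading coefficient, so by concavity it is enough to check positivity at the two endpoints. Direct computation gives $f(r) = n - rk$ and $f(r + k - 2) = (k-1)(n - 2r - 2k + 4)$. The first is positive because $n \geq k(r+1) - 1 = rk + (k-1) > rk$ for $k \geq 2$; the second is positive provided $n > 2r + 2k - 4$, which follows from $n \geq k(r+1) - 1$ together with the elementary identity $k(r+1) - 1 - (2r + 2k - 4) = (r-1)(k-2) + 1$, positive for all $r \geq 3$, $k \geq 2$.

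The edge cases $k \in \{0, 1\}$ are handled separately and trivially: for $k = 0$ the degree hypothesis $\delta(G) \geq n$ is vacuous, and for $k = 1$ the graph $G$ is complete and any $r$ vertices percolate at once. Combining everything, the process reaches $|A_t| \geq r + k - 1$ within at most $k - 1$ steps and then finishes in one more step, so $\langle A \rangle_r = V(G)$. The only real content of the argument is the endpoint verification for $f$, which is elementary arithmetic, so I do not anticipate any serious obstacle beyond this bookkeeping.
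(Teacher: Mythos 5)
Your proof is correct, but it takes a genuinely different route from the paper. The paper's argument is a one-shot count: since each vertex of $A$ has at most $k-1$ non-neighbours, the $r$ vertices of $A$ have at least $n - r - (k-1)r = n - kr \geq k - 1$ common neighbours outside $A$, so $|A_1| \geq r + (k-1)$ already after the first step, and then every remaining vertex (having at most $k-1$ non-neighbours) has at least $r$ infected neighbours and is infected in the second step. In contrast, you argue iteratively: you show the infected set grows by at least one vertex per step via a non-edge double-count while its size lies in $[r, r+k-2]$, and only then invoke the finishing step. Your double-counting inequality $(n-\ell)(\ell - r + 1) \leq \ell(k-1)$, the endpoint evaluations $f(r) = n - rk$ and $f(r+k-2) = (k-1)(n - 2r - 2k + 4)$, and the concavity argument all check out, and your treatment of $k \in \{0,1\}\,$ is fine (one small point worth making explicit: the growth stage needs $\ell < n$, which holds because $n \geq k(r+1) - 1 \geq r + k - 1 > r + k - 2 \geq \ell$). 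The paper's proof is shorter and gives the sharper conclusion that percolation happens in at most two rounds; your proof gives a weaker bound of $O(k)$ rounds but has the conceptual advantage of isolating a reusable ``growth'' lemma that applies to any closed set of size in the intermediate range, not just the initial $r$-set.
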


\begin{proof}
As each vertex in $G$ has at most $k-1$ non-neighbours, the vertices in the set $A$ have at least $n-|A| - (k-1)|A| =n-kr$ common neighbours.  Since $n-kr \geq k-1$, when the set $A$ is initially infected at least $k-1$ further vertices are infected at the first time step.  At this point, any uninfected vertex is adjacent to at least $(r+k-1) - (k-1) = r$ infected vertices and so becomes infected in the second time step.  Thus, the set $A$ percolates.
\end{proof}

\subsection{Threshold $r = 3$}\label{sec:structure-3}

In this subsection, it is shown that if any set of size $3$ in a graph with minimum degree $\lfloor n/2 \rfloor +1$ spans either $\lfloor n/2 \rfloor$ or $\lceil n/2 \rceil$ vertices, then some set of $3$ vertices percolates in $3$-neighbour bootstrap percolation.  The two different cases that arise when $n$ is odd are handled in separate propositions.

\begin{proposition}\label{prop:small-half-3}
Let $G$ be a graph on $n \geq 13$ vertices with $\delta(G) \geq \lfloor n/2 \rfloor +1$ and let $A \subseteq V(G)$ be such that $|A| = \lfloor n/2 \rfloor$.  If $\langle A \rangle_3 = A$, then $m(G, 3) = 3$.
\end{proposition}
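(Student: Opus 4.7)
My plan is first to extract a very rigid structure for $G$ from the two hypotheses, and then to use this to build a percolating triple of the form $\{a_1,a_2,b\}$ with $a_1,a_2\in A$ and $b\in B:=V(G)\setminus A$. The structural step is driven by double-counting $e(A,B)$. The closure condition $\langle A\rangle_3=A$ forces $\deg_A(y)\le 2$ for every $y\in B$, while the bound $\deg(x)\ge\lfloor n/2\rfloor+1$ combined with $\deg_A(x)\le|A|-1$ gives $\deg_B(x)\ge 2$ for every $x\in A$. Hence $2|A|\le e(A,B)\le 2|B|$. When $n$ is even this chain becomes equality, which forces $\deg_A(x)=|A|-1$ for every $x\in A$ and $\deg_B(y)=|B|-1$ for every $y\in B$; so both $G[A]$ and $G[B]$ are cliques and $G[A,B]$ is $2$-regular. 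When $n$ is odd the slack is at most $2$, and a short check shows that $G[A]$ is a clique or misses exactly one edge (and in that case the missing edge lies between two vertices of $\deg_B=3$), that $G[B]$ is a clique minus a matching, that at most two vertices of $B$ have $\deg_A=1$, and that each such vertex is adjacent in $G$ to every other vertex of $B$.

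Given this structure, I would pick $b\in B$ with $\deg_A(b)=2$ (possible since at most two vertices of $B$ fail this), let $a',a''$ denote its two $A$-neighbours, and then choose $a_1,a_2\in A\setminus\{a',a''\}$ that are additionally distinct from the at most two endpoints of the possible missing edge in $G[A]$. Since $|A|\ge 6$ for $n\ge 13$ and at most four vertices of $A$ are forbidden, two such $a_1,a_2$ remain, and by construction both have full degree in $G[A]$, hence are adjacent in $G$ to every vertex of $A$, including $a'$ and $a''$. Setting $X=\{a_1,a_2,b\}$, the cascade runs as follows. At time $1$, each of $a',a''$ sees the three infected vertices $a_1,a_2,b$ and joins $\langle X\rangle_3$. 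At time $2$, every $x\in A\setminus\{a_1,a_2,a',a''\}$ is adjacent to $a_1,a_2$ and to at least one of $a',a''$ (since $x$ misses at most one vertex inside $A$), so $x$ has at least three infected neighbours and is infected; thus all of $A$ is infected. At time $3$, every $y\in B\setminus\{b\}$ with $\deg_A(y)=2$ and $y\sim b$ has three infected neighbours and is infected; the at most three remaining vertices of $B$ are the at most two $y$ with $\deg_A=1$ and the one possible $B$-non-neighbour of $b$. At time $4$, each such remaining $y$ has $\deg_B(y)\ge|B|-2$, and inclusion-exclusion against the at least $|B|-3$ already-infected vertices of $B$ gives it at least $|B|-4\ge 3$ infected $B$-neighbours, which together with its $A$-neighbours pushes it over the threshold, so $X$ percolates.

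The main obstacle is the bookkeeping in the odd case: to isolate precisely which vertices can be \emph{defective}, meaning they fall short of full within-class degree or of the typical two cross-class neighbours, and then to arrange $\{a_1,a_2,b\}$ so that the first stage of the cascade encounters none of these defects. Once the seed is chosen correctly, the near-clique structure of both parts makes the later propagation automatic, with room to spare for $n\ge 13$.
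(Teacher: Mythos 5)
Your proof is correct, and it follows a genuinely different route from the paper's even though both begin with the same degree and edge counts. The paper's proof introduces $A_3=\{x\in A:\deg_{A^c}(x)\ge 3\}$ and disposes of the case $A_3\neq\emptyset$ immediately: by Lemma~\ref{lem:all-sets-perc}, any three vertices of $A^c$ already internally span $A^c$, and adding $A_3$ brings the closure to $\lceil n/2\rceil+1$ vertices so Lemma~\ref{lem:big-sets-perc} finishes. This leaves only the clean case $A_3=\emptyset$, where $G[A]$ is an exact clique with every vertex having exactly two neighbours across, and the seed chosen is $\{c,y,z\}$ with \emph{two} vertices $y,z\in A^c$ and one $c\in A$. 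You instead squeeze the full double-count $2|A|\le e(A,B)\le 2|B|$ to get a uniform near-clique structure on both sides (at most one missing edge in $G[A]$, a clique-minus-matching on $G[B]$, at most two vertices of $B$ with a single $A$-neighbour), absorbing the case $A_3\neq\emptyset$ into the slack analysis rather than branching on it, and then run a cascade from a seed $\{a_1,a_2,b\}$ with \emph{two} vertices in $A$ and one in $B$. Your version is more self-contained (you never invoke Lemma~\ref{lem:all-sets-perc} or \ref{lem:big-sets-perc}) and gives sharper structural information, at the cost of extra bookkeeping in the odd case; the paper's $A_3$-dichotomy is quicker and lets it treat even and odd $n$ together with a simpler final structural claim. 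Both cascades ultimately infect $A$ first and then mop up $B$, so the mechanism is the same; it is the structural reduction and the composition of the seed that differ.
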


\begin{proof}
Since any vertex $x \in A$ has at most $\lfloor n/2 \rfloor -1$ neighbours within $A$, then 
\[
\deg_{A^c}(x) \geq \lfloor n/2 \rfloor + 1 - \left(\lfloor n/2 \rfloor -1 \right) = 2.
\]
Since $\langle A \rangle_3 = A$, then any vertex $y \in A^c$ has at most $2$ neighbours in $A$ and so
\[
\deg_{A^c}(y) \geq \lfloor n/2 \rfloor + 1 - 2 = \lfloor n/2 \rfloor -1 \geq \left(\lceil n/2 \rceil -1\right) - 1.
\]
Then, by Lemma~\ref{lem:all-sets-perc}, any set of $3$ vertices in $A^c$ infects all of $A^c$.

Set $A_3 = \{x \in A \mid \deg_{A^c}(x) \geq 3\}$.  If $A_3 \neq \emptyset$, then for any three vertices $a, b, c \in A^c$,
\[
|\langle \{a, b, c\} \rangle_3| \geq |A^c| + |A_3| \geq \lceil n/2 \rceil +1.
\]
Then, by Lemma~\ref{lem:big-sets-perc}, $\langle \{a, b, c\} \rangle_3 = V(G)$.

Thus, assume that $A_3 = \emptyset$ and hence $G[A]$ is a complete graph with every vertex having exactly $2$ neighbours in $A^c$.  Since any vertex in $A^c$ has at most $\lceil n/2 \rceil -1$ neighbours within $A^c$, then every vertex in $A^c$ has at least $1$ neighbour in $A$.  Set $B_1 = \{y \in A^c \mid \deg_A(y) = 1\}$.  Since $A$ is closed, every vertex in $A^c$ has at most $2$ neighbours in $A$.  Then, 
\[
2\lfloor n/2 \rfloor = e(A, A^c) = |B_1| + 2(\lceil n/2 \rceil - |B_1|) = 2\lceil n/2 \rceil - |B_1|
\]
which implies that $|B_1| = 2(\lceil n/2 \rceil - \lfloor n/2 \rfloor) \leq 2$.

\begin{center}
\begin{figure}[htb]
\begin{tikzpicture}[scale = 0.75]
	\tikzstyle{vertex}=[circle, draw=black,  minimum size=5pt,inner sep=0pt]
	
	\filldraw[draw = black, fill = gray!20] (0, 2) ellipse (1cm and 3cm);
	\filldraw[draw = black, fill = white] (4, 2) ellipse (1cm and 3cm);
	\node at (0, -1.5) {$A$};
	\node at (4, -1.5) {$A^c$};
	
	\node[vertex, fill = white, label=above:{$a$}] at (0, 4) (a) {};
	\node[vertex, fill = white, label=below right:{$b$}] at (0, 3) (b) {};
	\node[vertex, fill = black, label=below:{$c$}] at (0, 0) (c) {};
	\node[vertex, fill = black, label=right:{$y$}] at (4, 4) (y) {};
	\node[vertex, fill =black, label=right:{$z$}] at (4, 3) (z) {};
	
	\draw (z) -- (a) -- (y) -- (b) -- (c);
	\draw (a) -- (b);
	\draw (c) to[bend left = 30] (a);
	
\end{tikzpicture}
\caption{Case for $r = 3$ and $A_3 = \emptyset$ in the proof of Proposition~\ref{prop:small-half-3}.}\label{fig:r3-floor}
\end{figure}
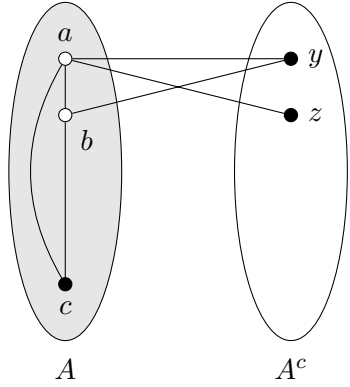
\end{center}

Pick any $y \in A^c \setminus B_1$ and let $a, b$ be its $2$ neighbours in $A$.  Let $z \in A^c$ be any other neighbour of $a$ and choose any $c \in A \setminus \{a, b\}$.  Consider the effect of initially infecting the set $\{c, y, z\}$; see Figure~\ref{fig:r3-floor}.  Then, $a$ is adjacent to all $3$ and becomes infected in the first time step.  Then, $b$ is adjacent to $a$, $c$, and $y$ and so becomes infected by the second time step.  Since $G[A]$ is complete and contains three infected vertices, all remaining vertices of $A$ are infected by the third time step.  Finally, any vertex in $A^c \setminus B_1$ is adjacent to at least one of $y$ and $z$ and has two further infected neighbours in $A$ and so becomes infected by time step $4$.  Finally, if there is a vertex in $B_1$, it is adjacent to all elements of $A^c$ and has one infected neighbour in $A$ and so also becomes infected by step $4$.
\end{proof}

\begin{proposition}\label{prop:big-half-3}
Let $n \geq 13$ be odd and let $G$ be a graph on $n$ vertices with $\delta(G) \geq \frac{n+1}{2}$ and let $A \subseteq V(G)$ be such that $|A| =\frac{n+1}{2}$.  If $\langle A \rangle_3 = A$, then $m(G, 3) = 3$.
\end{proposition}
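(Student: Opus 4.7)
The plan is to squeeze out the full structure of $A$ and $A^c$ from the hypotheses and then read off a percolating triple. A quick degree count shows that every $y \in A^c$ has $\deg(y) \geq (n+1)/2$ while $\deg_A(y) \leq 2$ (by closure of $A$) and $\deg_{A^c}(y) \leq |A^c|-1 = (n-3)/2$; equality must hold throughout, so $\deg_A(y) = 2$ for each $y \in A^c$ and $G[A^c]$ is a complete graph. Dually, each $x \in A$ satisfies $\deg_{A^c}(x) \geq 1$, and if $x \in A_k := \{x \in A : \deg_{A^c}(x) = k\}$ then $x$ has at most $k-1$ non-neighbours in $A$. Writing $a_k = |A_k|$, the identities $\sum_k a_k = (n+1)/2$ and $\sum_k k a_k = e(A, A^c) = 2|A^c| = n-1$ combine to
\[
a_1 = 2 + \sum_{k \geq 3}(k-2) a_k,
\]
so $a_1 \geq 2$, and $a_1 \geq 3$ whenever $A_{\geq 3}$ is non-empty. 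In particular every $A_1$-vertex is adjacent to all of $A \setminus \{x\}$.

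Given this, I pick distinct $c, d \in A_1$ with unique $A^c$-neighbours $y_c, y_d$ and choose $u \in A^c \setminus \{y_c, y_d\}$ (using $|A^c| \geq 6$). Let $a, b$ denote the two $A$-neighbours of $u$; since $u \neq y_c, y_d$, neither $c$ nor $d$ is adjacent to $u$, so $\{a, b\} \subseteq A \setminus \{c, d\}$. I will argue that $\{u, c, d\}$ percolates by tracing the infection: at time $1$, $a$ and $b$ each see the three infected vertices $u, c, d$ and turn on. At time $2$, every $e \in A_1 \setminus \{c, d\}$ is adjacent to all of $c, d, a, b$ (because $A_1$-vertices are universal in $A$) and each $x \in A_2$ is adjacent to $c, d$ and, having at most one non-neighbour in $A$, to at least one of $a, b$; so $A_1 \cup A_2 \cup \{a, b\}$ is infected by time $2$.

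If $A_{\geq 3} = \emptyset$ then $A$ is now fully infected, and one more step propagates the infection to $A^c$, since every $w \in A^c \setminus \{u\}$ sees $u$ (in the complete $A^c$) together with its two infected $A$-neighbours. Otherwise $a_1 \geq 3$, and at time $3$ each $x \in A_{\geq 3} \setminus \{a, b\}$ sees all $a_1 \geq 3$ now-infected vertices of $A_1$ and is infected, after which $A^c$ fills in as before. The main technical obstacle is the class $A_{\geq 3}$: such a vertex may have up to $\deg_{A^c}(x) - 1 \geq 2$ non-neighbours in $A$ and so can be adjacent to neither $a$ nor $b$, leaving only $c, d$ as infected neighbours at time $2$. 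The displayed identity is precisely the bookkeeping that circumvents this, since the presence of any high-degree vertex forces a third universal $A_1$-vertex, whose universal adjacency to $A$ supplies the missing infected neighbour at the next step.
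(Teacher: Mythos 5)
Your proof is correct, and it takes a genuinely different route from the paper's.

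The paper handles this case by peeling off the high-out-degree vertices $A_3 = \{x \in A : \deg_{A^c}(x) \geq 3\}$ and then doing a case analysis on $|A_3|$: for $|A_3| \geq 2$ it invokes Lemma~\ref{lem:big-sets-perc} directly, and for $A_3 = \emptyset$ (respectively $|A_3| = 1$ with $A \setminus A_3$ closed) it reduces to Proposition~\ref{prop:small-half-3} applied to $A^c$ (respectively $A \setminus A_3$), while the remaining sub-case uses Lemma~\ref{lem:all-sets-perc} plus Lemma~\ref{lem:big-sets-perc}. Your argument avoids both reductions: the double-count identity $\sum_k (k-2) a_k = -2$ over the out-degree partition of $A$ is the key extra observation, forcing $a_1 \geq 2$ unconditionally and $a_1 \geq 3$ as soon as any $A_{\geq 3}$-vertex exists, i.e., the presence of vertices that are hard to infect automatically manufactures enough universal vertices in $A$ to infect them. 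With that in hand you exhibit an explicit triple $\{u,c,d\}$ and trace the infection front by hand, never needing Proposition~\ref{prop:small-half-3} or Lemma~\ref{lem:big-sets-perc}. The tradeoff is that the paper's reduction is shorter given the machinery already in place, whereas your proof is self-contained and gives a concrete short-time percolation schedule (four rounds), which the paper's indirect argument does not. Both correctly use $|A^c| = (n-1)/2 \geq 6$ from $n \geq 13$; your choice of $u$ avoiding $y_c, y_d$ is where this enters.
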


\begin{proof}
Counting degrees as in the previous proof, for every $y \in A^c$, since $\langle A \rangle_3 = A$, then $y$ has at most $2$ neighbours in $A$ and so at least $\frac{n+1}{2} - 2 = |A^c| -1$ neighbours in $A^c$.  That is, $G[A^c]$ is a complete graph and every vertex has exactly $2$ neighbours in $A$.

Any vertex in $A$ has at least $1$ neighbour in $A^c$.  Set $A_3 = \{x \in A \mid \deg_A(x) \geq 3\}$.  If $|A_3| \geq 2$, then any three vertices in $A^c$ span at least $|A^c| + |A_3| \geq \frac{n-1}{2} + 2 = \lceil n/2 \rceil +1$ vertices and so percolate by Lemma~\ref{lem:big-sets-perc}.  If $A_3 = \emptyset$, then $\langle A^c \rangle_3 = A^c$ and so by Proposition~\ref{prop:small-half-3}, $G$ has a percolating set of size $3$.

Assume now that $|A_3| = 1$.  Note that every vertex in $A\setminus A_3$ has either $1$ or $2$ neighbours in $A^c$ and at most one non-neighbour in $A$.  Thus, by Lemma~\ref{lem:all-sets-perc}, any set of size $3$ in $A \setminus A_3$ eventually infects all of $A \setminus A_3$.  If $\langle A \setminus A_3 \rangle_3 = A \setminus A_3$, then again by Proposition~\ref{prop:small-half-3}, $G$ has a percolating set of size $3$.  

Therefore, assume further that $|A_3| = 1$ and that $\langle A\setminus A_3 \rangle_3 = A$.  Let $x$ be any vertex in $A \setminus A_3$ and let $a$ be one its neighbours in $A^c$.  Let $y, z \in A \setminus A_3$ be any two neighbours of $x$ and consider the effect of initially infecting $\{a, y, z\}$.  Then since $x$ is adjacent to all $3$, it is infected in the first time step.  By assumption, $\langle \{x, y, z\} \rangle_3 = A$ and so $\langle \{a, y, z\}\rangle_3 \supseteq A \cup \{a\}$, which is a set of size $\lceil n/2 \rceil +1$.  Thus, by Lemma~\ref{lem:big-sets-perc}, the vertices $a, y, z$ percolate.

In all cases, the graph $G$ contains $3$ vertices that percolate and so $m(G, 3) = 3$.
\end{proof}

With these two results, the proof of Theorem~\ref{thm:ub-3} now follows.

\begin{proof}[Proof of Theorem~\ref{thm:ub-3}]
Let $n \geq 30$ and let $G$ be a graph on $n$ vertices with $\delta(G) \geq \lfloor n/2 \rfloor +1$.  By Lemma~\ref{lem:complete-bip-3}, $G$ contains a copy of $K_{2, 3}$.  Let $A$ be a set of $3$ vertices in one of the partition classes in any copy of $K_{2,3}$, since $|\langle A \rangle_r| \geq 5 > 2(3-1)$, by Lemma~\ref{lem:big-sets-perc} and Proposition~\ref{prop:no-mid-size-closed}, either $A$ percolates or else $|\langle A \rangle_3| \in \{\lfloor n/2 \rfloor, \lceil n/2 \rceil\}$.  By Propositions~\ref{prop:small-half-3} and \ref{prop:big-half-3}, if $|\langle A \rangle_3| \in \{\lfloor n/2 \rfloor, \lceil n/2 \rceil\}$, then $G$ contains some set of size $3$ that percolates.  Thus, $m(G, 3) = 3$, which completes the proof.
\end{proof}

\subsection{Threshold $r \geq 4$}\label{sec:structure-r}

In this subsection, we consider bootstrap processes with infection threshold $r \geq 4$ and give the proof of Theorem~\ref{thm:main-ub}.  The proof uses more steps than that for the corresponding result for $r=3$ because of the weaker result for Proposition~\ref{prop:no-mid-size-closed} in the case $r \geq 4$.

\begin{proposition}\label{prop:vsmall-half-r}
Let $r \geq 4$, let $n$ be sufficiently large, and let $G$ be a graph on $n$ vertices with $\delta(G) \geq \lfloor n/2 \rfloor + (r-3)$.  If there is a set $A \subseteq V(G)$ with $|A| = \lfloor n/2 \rfloor -1$ and $\langle A \rangle_r = A$, then $m(G, r) = r$.
\end{proposition}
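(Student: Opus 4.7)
The plan is to split into two cases based on whether some vertex of $A$ has at least $r$ neighbours in $A^c$, after first extracting the local structure forced by $|A| = \lfloor n/2\rfloor - 1$. Since each $x \in A$ has at most $|A|-1 = \lfloor n/2\rfloor - 2$ neighbours inside $A$, the degree hypothesis yields $\deg_{A^c}(x) \geq r-1$; and since $\langle A \rangle_r = A$ forces $\deg_A(y) \leq r-1$ for each $y \in A^c$, we get $\deg_{A^c}(y) \geq \lfloor n/2\rfloor - 2 \geq |A^c| - 3$, so $G[A^c]$ has very high minimum degree. Set $A_r = \{x \in A : \deg_{A^c}(x) \geq r\}$.

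\emph{Case 1: $A_r \neq \emptyset$.} For $n$ large, $|A^c| \geq 3(r+1)-1$, so Lemma~\ref{lem:all-sets-perc} (applied with $k = 3$ inside $G[A^c]$) shows that any $r$ initially infected vertices in $A^c$ have closure containing all of $A^c$, even as a subset of $G$. Once $A^c$ is entirely infected, each $x \in A_r$ has at least $r$ infected neighbours and joins the closure, giving an infected set of size at least $|A^c|+1 = \lceil n/2\rceil + 2$. Lemma~\ref{lem:big-sets-perc} (with $k = r-3$) then forces full percolation.

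\emph{Case 2: $A_r = \emptyset$.} Every $x \in A$ now satisfies $\deg_{A^c}(x) = r-1$ and $\deg_A(x) \geq |A|-1$, so $G[A]$ is a clique. Counting edges across the cut gives $e(A, A^c) = (r-1)|A|$ while each $y \in A^c$ contributes $\deg_A(y) \leq r-1$, so the total deficit is $(r-1)(|A^c|-|A|) \leq 3(r-1)$. Hence $B_{r-1} := \{y \in A^c : \deg_A(y) = r-1\}$ satisfies $|A^c \setminus B_{r-1}| \leq 3(r-1)$ and is nonempty. Pick any $y \in B_{r-1}$ and any $X_A \subseteq A$ with $|X_A|=r-1$ and $X_A \neq N_A(y)$; initially infect $X = X_A \cup \{y\}$. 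In step $1$, every $x' \in A \setminus X_A$ already has $r-1$ infected clique-neighbours in $X_A$ and becomes infected exactly when $x' \in N_A(y)$, so the vertices of $A$ infected by step $1$ include $X_A \cup N_A(y)$, a set of size at least $r$ since $X_A \neq N_A(y)$. In step $2$, the clique $G[A]$ spreads infection to all of $A$. In step $3$, each $z \in B_{r-1} \cap N(y)$ has all $r-1$ of its $A$-neighbours infected together with $y$, so joins the infected set. Using $\deg_{A^c}(y) \geq \lfloor n/2\rfloor - 2$ and the deficit bound, the infected set has size at least
\[
(\lfloor n/2\rfloor - 1) + 1 + \bigl(\lfloor n/2\rfloor - 2 - 3(r-1)\bigr) \geq \lceil n/2\rceil + 2
\]
for $n$ sufficiently large, and Lemma~\ref{lem:big-sets-perc} completes percolation.

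The main obstacle is Case $2$: when $A_r = \emptyset$, no initial set wholly inside $A^c$ can ignite $A$, so one must build a mixed initial set that exploits the forced clique on $A$ to inflate one extra vertex $y \in A^c$ into an $r$-subset of $A$ infected in a single step. The deficit bound on $B_{r-1}$ then pushes the cascade past the $\lceil n/2\rceil + 2$ threshold of Lemma~\ref{lem:big-sets-perc}.
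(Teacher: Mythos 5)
Your proof is correct, but takes a more involved route than the paper's. One minor arithmetic slip: since $|A^c|=\lceil n/2\rceil+1$, the bound $\deg_{A^c}(y)\geq \lfloor n/2\rfloor-2$ gives $\deg_{A^c}(y)\geq |A^c|-4$ when $n$ is odd (each $y\in A^c$ can have up to $3$ non-neighbours in $A^c$), not $|A^c|-3$ as you wrote; you should therefore invoke Lemma~\ref{lem:all-sets-perc} with $k=4$ rather than $k=3$, but for $n$ large the conclusion (any $r$-subset of $A^c$ infects all of $A^c$) is unaffected.

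The paper avoids your case split entirely. It observes that every $x\in A$ has a neighbour in $A^c$, then picks any edge $ab$ with $a\in A$, $b\in A^c$, and seeds $\{a,v_1,\dots,v_{r-1}\}$ where the $v_i$ are neighbours of $b$ in $A^c$. The vertex $b$ is infected at once, the $r$-subset $\{b,v_1,\dots,v_{r-1}\}\subseteq A^c$ then infects all of $A^c$ by Lemma~\ref{lem:all-sets-perc}, and the closure already has $|A^c\cup\{a\}|=\lceil n/2\rceil+2$ vertices, so Lemma~\ref{lem:big-sets-perc} finishes. Your Case 1 is a close variant of this (seed inside $A^c$, harvest a vertex from $A_r$), but your Case 2 ($A_r=\emptyset$, so $G[A]$ is forced to be a clique) is a genuinely different and heavier argument: a deficit count on cross-edges, a careful choice of $X_A\neq N_A(y)$, and a three-step cascade through the clique $G[A]$ back into $A^c$. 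It works, but the paper's single construction makes the dichotomy unnecessary, because by seeding one vertex of $A$ directly you never need $A^c$-infection to reach back into $A$ at all.
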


\begin{proof}
Counting edges as in previous proofs and using the fact that $\langle A \rangle_r = A$, for any vertex $y \in A^c$ 
\[
r-4 \leq (r-3) - \lceil n/2 \rceil + \lfloor n/2 \rfloor \leq \deg_{A}(y) \leq r-1
\]
and $y$ has at most $3$ non-neighbours in the set $A^c$.  Thus, by Lemma~\ref{lem:all-sets-perc}, any set of $r$ vertices in $A^c$ infects all of $A^c$.  Since any vertex in $A$ has at least $r-1 \geq 1$ neighbours in $A^c$, $e(A, A^c) \neq 0$.  Let $b \in A^c$ be any vertex with a neighbour $a \in A$.  Let $v_1, v_2, \ldots, v_{r-1}$ be any neighbours of $b$ in $A^c$ and consider initially infecting the set $\{a, v_1, v_2, \ldots, v_{r-1}\}$.  Since $b$ is adjacent to all $r$ infected vertices, it is infected at the first time step.  Then, by the previous comment, $\{b, v_1, \ldots, v_{r-1}\}$ internally spans the entire set $A^c$.  Since, 
\[
|\langle \{a, v_1, v_2, \ldots, v_{r-1}\} \rangle_r| \geq |A^c \cup \{a\}| = \lceil n/2 \rceil + 1 +1 = \lceil n/2 \rceil +2,
\]
then by Lemma~\ref{lem:big-sets-perc}, the set $\{a, v_1, v_2, \ldots, v_{r-1}\}$ percolates and so $m(G, r) = r$.
\end{proof}

\begin{proposition}\label{prop:floor-half-r}
Let $r \geq 4$, let $n$ be sufficiently large, and let $G$ be a graph on $n$ vertices with $\delta(G) \geq \lfloor n/2 \rfloor + (r-3)$.  If there is a set $A \subseteq V(G)$ with $|A| = \lfloor n/2 \rfloor$ and $\langle A \rangle_r = A$, then $m(G, r) = r$.
\end{proposition}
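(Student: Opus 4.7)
The plan is to exhibit an $r$-set $X$ whose closure has size at least $\lceil n/2\rceil + 2$, which by Lemma~\ref{lem:big-sets-perc} (with $k = r-3$) forces $\langle X\rangle_r = V(G)$. Basic edge counting gives two facts used throughout: every $y \in A^c$ has $\deg_A(y) \leq r-1$ (because $\langle A\rangle_r = A$), hence $\deg_{A^c}(y) \geq \lfloor n/2\rfloor - 2$ and $y$ has at most two non-neighbours in $A^c$; and every $x \in A$ has $\deg_{A^c}(x) \geq r-2$. In particular, Lemma~\ref{lem:all-sets-perc} applied to the very dense induced subgraph $G[A^c]$ shows that any $r$ vertices of $A^c$ span $A^c$. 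So it suffices to find $X$ whose closure contains $A^c$ together with at least two vertices of $A$.

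The argument will split on the sets $A_{\geq r} = \{x \in A : \deg_{A^c}(x) \geq r\}$ and $A_{r-1} = \{x \in A : \deg_{A^c}(x) = r-1\}$. If $A_{\geq r} \neq \emptyset$, fix $a^* \in A_{\geq r}$ and any $a \in A \setminus \{a^*\}$; the Proposition~\ref{prop:vsmall-half-r} construction---pick $b \in N(a) \cap A^c$, pick $v_1, \dots, v_{r-1}$ from $N(b) \cap A^c$, and set $X = \{a, v_1, \dots, v_{r-1}\}$---forces $b$ to be infected at step 1, after which the $r$ vertices $b, v_1, \dots, v_{r-1} \in A^c$ span all of $A^c$, and finally $a^*$ is absorbed because it has at least $r$ neighbours in the infected $A^c$, yielding $\langle X\rangle_r \supseteq A^c \cup \{a, a^*\}$. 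The sub-case $A_{\geq r} = \emptyset$, $A_{r-1} \neq \emptyset$ is handled identically: choose $a' \in A_{r-1}$ and $a \in N(a') \cap A$, which is non-empty since $\deg_A(a') \geq \lfloor n/2\rfloor - 2 > 0$; after $A^c$ becomes infected, $a'$ has $r-1$ infected neighbours in $A^c$ plus the infected $a$, giving $r$ infected neighbours, so $a' \in \langle X\rangle_r$.

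The main obstacle is the remaining sub-case $A_{\geq r} = A_{r-1} = \emptyset$, where every $x \in A$ has $\deg_{A^c}(x) = r-2$. This forces $\deg_A(x) \geq |A|-1$, so $G[A]$ is complete, while $G[A, A^c]$ is $(r-2)$-regular from the $A$-side. Because $r \geq 4$ each $a \in A$ is the centre of at least $\binom{r-2}{2} \geq 1$ cherries in $G[A, A^c]$, so there exist $z_1, z_2 \in A^c$ sharing a common $A$-neighbour $a^*$; a short count will further arrange $\deg_A(z_j) \geq 2$ for $j = 1, 2$ (automatic when $r \geq 5$, and excluding only a bounded number of candidate $a^*$ when $r = 4$). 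Pick $a_1, \dots, a_{r-2} \in A \setminus \bigl(\{a^*\} \cup N(z_1) \cup N(z_2)\bigr)$, which is possible since the excluded set has size at most $2(r-1)$, and set $X = \{a_1, \dots, a_{r-2}, z_1, z_2\}$. At step 1, $a^*$ sees all of $a_1, \dots, a_{r-2}$ via the clique $G[A]$ and both $z_1, z_2$ by construction, so it is infected; at step 2, every vertex in $(N(z_1) \cup N(z_2)) \setminus \{a^*\}$ (non-empty by the arrangement) gains $r-1$ clique-neighbours in $A$ already infected and at least one $z_j$, so is infected too. At this point $|\langle X\rangle_r \cap A| \geq r$, so Lemma~\ref{lem:all-sets-perc} applied to the complete graph $G[A]$ infects all of $A$. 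A further short cascade on $A^c$ then completes the proof: every $y \in A^c$ adjacent to both $z_1$ and $z_2$ with $\deg_A(y) \geq r-2$ acquires $r$ infected neighbours, and since the at-most-two-non-neighbours property rules out only a handful of exceptions, more than $r$ vertices of $A^c$ become infected, after which Lemma~\ref{lem:all-sets-perc} on $G[A^c]$ sweeps up the rest. Verifying this final cascade---particularly for odd $n$, where a positive fraction of $A^c$ can have $\deg_A(y) = r-3$ and needs infection to propagate within $A^c$ before these vertices acquire enough infected neighbours---is the most delicate technical point of the proof.
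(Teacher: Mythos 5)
Your proof is correct, but it follows a genuinely different route from the paper's. The paper splits on whether the bipartite graph $G[A, A^c]$ contains a copy of $K_{2,2}$: if so, the two $A$-vertices of the $K_{2,2}$ together with $r-2$ common $A^c$-neighbours of its two $A^c$-vertices form a percolating set; if not, a careful $K_{2,2}$-free argument produces a vertex $x \in A^c$ and one of its ``induced'' neighbours $b_1$ plus two $A$-vertices and $r-2$ common neighbours. You instead split on the degree distribution of $A$ into $A^c$: whenever some $x \in A$ has $\deg_{A^c}(x) \geq r-1$, the Proposition~\ref{prop:vsmall-half-r}-style construction already works, leaving only the highly-structured case $\deg_{A^c}(x) = r-2$ for all $x \in A$, which forces $G[A]$ complete and admits a cherry-centred cascade. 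Both decompositions are sound and of comparable length. However, your write-up leaves two points hand-waved that you should flesh out. First, when $r = 4$ and $n$ is odd, the number of $y \in A^c$ with $\deg_A(y) = 1$ can be linear in $n$ (not a ``bounded number''), so the claim that enough good choices of $a^*$ remain needs the actual count: each such $y$ eliminates at most one candidate $a^*$, and one can bound the count of degree-$1$ vertices by roughly $n/4 < |A|$. Second, the final cascade does go through, but you should verify it: after all of $A$ and $\{z_1, z_2\}$ are infected, at least $|A^c| - 4 - k$ vertices of $A^c$ are adjacent to both $z_1$ and $z_2$ and satisfy $\deg_A(y) \geq r-2$ (hence become infected), where $k$ is the count of $\deg_A = r-3$ vertices; the edge count $e(A,A^c) = (r-2)\lfloor n/2\rfloor$ gives $k \leq (n+2r-3)/4$, so this exceeds $r$ for large $n$, and then Lemma~\ref{lem:all-sets-perc} on $G[A^c]$ (which has $\delta \geq |A^c| - 3$) absorbs the remaining vertices, including those with $\deg_A(y) = r-3$.
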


\begin{proof}
Since $\langle A \rangle_r = A$ and $|A^c| = \lceil n/2 \rceil$, then for every $y \in A^c$,
\[
r-3 \leq \lfloor n/2 \rfloor + (r-3) - (\lceil n/2 \rceil -1) \leq \deg_A(y) \leq r-1
\]
and also $y$ has at most $2$ non-neighbours within $A^c$.  Thus, by Lemma~\ref{lem:all-sets-perc}, any $r$ vertices in $A^c$ infect all of $A^c$.

\begin{center}
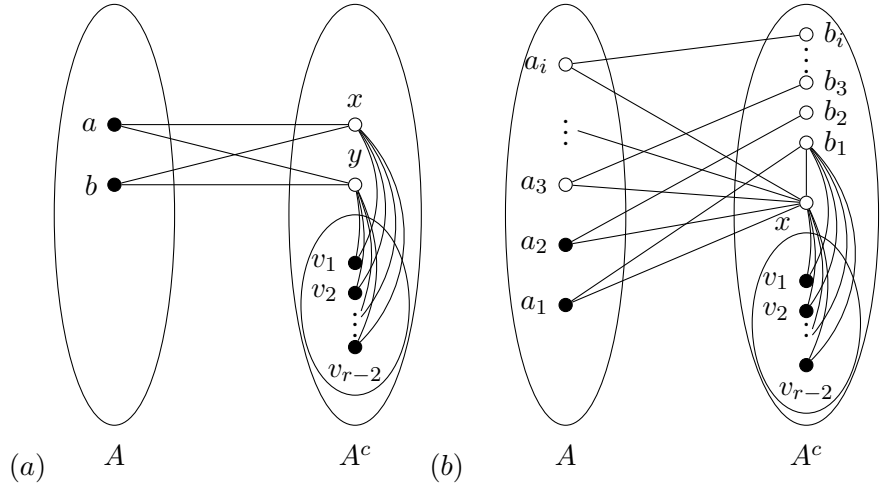
\begin{figure}
$(a)$ \begin{tikzpicture}[scale = 0.8]
	\tikzstyle{vertex}=[circle, draw=black,  minimum size=5pt,inner sep=0pt]
	
	\filldraw[draw = black, fill = white] (0, 2.5) ellipse (1cm and 3.5cm);
	\filldraw[draw = black, fill = white] (4, 2.5) ellipse (1.1cm and 3.5cm);
	\node at (0, -1.5) {$A$};
	\node at (4, -1.5) {$A^c$};
	
	\node[vertex, fill = black, label=left:{$a$}] at (0, 4) (a) {};
	\node[vertex, fill = black, label=left:{$b$}] at (0, 3) (b) {};
	\node[vertex, label=above:{$x$}] at (4, 4) (x) {};
	\node[vertex, label=above:{$y$}] at (4, 3) (y) {};
	
	\draw (a) -- (x) -- (b) -- (y) -- (a);
	
	\draw (4, 1) ellipse (0.9cm and 1.5 cm);
	
	\node[vertex, fill = black, label=left:{$v_1$}] at (4, 1.7) (v1) {};
	\node[vertex, fill = black, label=left:{$v_2$}] at (4, 1.2) (v2) {};
	\node at (4, 0.8) {$\vdots$};
	\node[vertex, fill = black, label=below:{$v_{r-2}$}] at (4, 0.3) (v3) {};
	
	\draw (y) to[bend left=10] (v1);
	\draw (y) to[bend left = 15] (v2);
	\draw (y) to[bend left=20] (4.1, 0.9);
	\draw (y) to[bend left = 25] (v3);
	
	\draw (x) to[bend left =30] (v1);
	\draw (x) to[bend left = 35] (v2);
	\draw (x) to[bend left = 40] (4.1, 0.8);
	\draw (x) to[bend left = 45] (v3);
	
\end{tikzpicture}
 $(b)$ \hspace*{10pt}
\begin{tikzpicture}[scale = 0.8]
	\tikzstyle{vertex}=[circle, draw=black,  minimum size=5pt,inner sep=0pt]
	
	\filldraw[draw = black, fill = white] (0, 2.5) ellipse (1cm and 3.5cm);
	\filldraw[draw = black, fill = white] (4, 2.5) ellipse (1.2cm and 3.5cm);
	\node at (0, -1.5) {$A$};
	\node at (4, -1.5) {$A^c$};
	
	\node[vertex, label=right:{$b_1$}] at (4, 3.7) (b1) {};
	\node[vertex, label=below left:{$x$}] at (4, 2.7) (x) {};
	\draw (x) -- (b1);
	
	\node[vertex, label=right:{$b_2$}] at (4, 4.2) (b2) {};
	\node[vertex, label=right:{$b_3$}] at (4, 4.7) (b3) {};
	\node at (4, 5.2) {$\vdots$};
	\node[vertex, label=right:{$b_i$}] at (4, 5.5) (b4) {};
	
	\node[vertex, fill = black, label=left:{$a_1$}] at (0, 1) (a1) {};
	\draw (x) -- (a1) -- (b1);
	\node[vertex, fill = black, label=left:{$a_2$}] at (0, 2) (a2) {};
	\draw (x) -- (a2) -- (b2);
	\node[vertex, label=left:{$a_3$}] at (0, 3) (a3) {};
	\draw (x) -- (a3) -- (b3);
	\node at (0, 4) {$\vdots$};
	\draw (x) -- (0.2, 3.9);
	\node[vertex, label=left:{$a_i$}] at (0, 5) (a4) {};
	\draw (x) -- (a4) -- (b4);
	
	\draw (4, 0.7) ellipse (0.9cm and 1.5 cm);
	
	\node[vertex, fill = black, label=left:{$v_1$}] at (4, 1.4) (v1) {};
	\node[vertex, fill = black, label=left:{$v_2$}] at (4, 0.9) (v2) {};
	\node at (4, 0.8) {$\vdots$};
	\node[vertex, fill = black, label=below:{$v_{r-2}$}] at (4, 0) (v3) {};
	
	\draw (x) to[bend left=10] (v1);
	\draw (x) to[bend left = 15] (v2);
	\draw (x) to[bend left=20] (4.1, 0.6);
	\draw (x) to[bend left = 25] (v3);
	
	\draw (b1) to[bend left =30] (v1);
	\draw (b1) to[bend left = 35] (v2);
	\draw (b1) to[bend left = 40] (4.1, 0.5);
	\draw (b1) to[bend left = 45] (v3);
\end{tikzpicture}	

\caption{$(a)$ $G[A, A^c]$ contains a copy of $K_{2,2}$; $(b)$ $G[A, A^c]$ is $K_{2,2}$-free}\label{fig:r4-small-half}
\end{figure}
\end{center}

If the graph $G[A, A^c]$ contains a copy of $K_{2, 2}$ with vertices $a, b \in A$ and $x, y \in A^c$, let $v_1, v_2, \ldots, v_{r-2}$ be any $r-2$ common neighbours of $x$ and $y$ in $A^c$ and consider initially infecting the set $\{a, b, v_1, v_2, \ldots, v_{r-2}\}$, as in Figure~\ref{fig:r4-small-half} $(a)$.  The vertices $x$ and $y$ are infected in the first time step and subsequently all vertices in $A^c$ are infected.  Since at least $|A^c| + 2 = \lceil n/2 \rceil +2$ vertices are infected, the set percolates by Lemma~\ref{lem:big-sets-perc}.

Now, assume that the graph $G[A, A^c]$ contains no copy of $K_{2,2}$.  Since every vertex $x \in A$ has at least $\lfloor n/2 \rfloor + (r-3) - (\lfloor n/2 \rfloor -1) = r-2$ neighbours in $A^c$, then $e(A, A^c) \geq (r-2)\lfloor n/2 \rfloor$ and so there are at most $\lceil n/2 \rceil/(r-2)$ vertices $y \in A^c$ with $\deg_A(y) = r-3$.  Let $x \in A^c$ be a vertex with $\deg_A(x) = i \in \{r-2, r-1\}$ and let $a_1, a_2, \ldots, a_i$ be its neighbours in $A$.  Note that $i \geq 2$.  As each $a_j$ has at least $r-2 \geq 2$ neighbours in $A^c$, for each $j \leq i$, let $b_j \in A^c \setminus \{x\}$ be a neighbour of $a_j$.  Since $G[A, A^c]$ contains no copy of $K_{2, 2}$ all of the vertices $\{b_1, b_2, \ldots, b_i\}$ are distinct.  Since the vertex $x$ has at most $i - (r-3)$ non-neighbours in $A^c$ and $i - (r-3) \leq i-1$, then $x$ is adjacent to at least one vertex in $\{b_1, b_2, \ldots, b_i\}$.  Without loss of generality, suppose that $x$ is adjacent to $b_1$.  Let $v_1, v_2, \ldots, v_{r-2}$ be any common neighbours of $x$ and $b_1$ in $A^c$ and consider initially infecting the set $\{a_1, a_2, v_1, v_2, \ldots, v_{r-2}\}$, as in Figure~\ref{fig:r4-small-half} $(b)$.  The vertex $x$ is infected in the first time step, and $b_1$ by the second time step.  Then, $A^c$ is internally spanned by $\{x, b_1, v_1, \ldots, v_{r-2}\}$ and since
\[
|\langle \{a_1, a_2, v_1, v_2, \ldots, v_{r-2}\} \rangle_r| \geq |A^c \cup \{a_1, a_2\}| = \lceil n/2 \rceil +2,
\]
then by Lemma~\ref{lem:big-sets-perc}, all vertices are eventually infected and so $m(G, r) = r$.
\end{proof}

The remaining two cases consist of showing that if a set $A$ satisfies $\langle A \rangle_r = A$ and $\lceil n/2 \rceil \leq |A| \leq \lceil n/2 \rceil +1$, then there is a set of size $r$ that percolates.  The following structural fact about graphs with a large closed set is used repeatedly.  The straightforward proof follws the same arguments used in previous propositions in this section.

\begin{fact}\label{fact:big-half-r}
For any $r \geq 4$, $n \geq 2r$ and $G$ a graph on $n$ vertices with $\delta(G) \geq \lfloor n/2 \rfloor +1$, let $A$ be a set with $|A| = \lceil n/2 \rceil +r-3$ and $\langle A \rangle_r = A$.  Then $G[A^c]$ is a complete graph and every vertex has exactly $r-1$ neighbours in $A$.
\end{fact}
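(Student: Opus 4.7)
The plan is to establish the fact by a tight double count of $\deg(y)$ for each $y \in A^c$. Writing $|A^c| = n - |A| = \lfloor n/2 \rfloor - r + 3$, I would fix an arbitrary $y \in A^c$ and bound its degree from above in two pieces.

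For the in-$A$ piece: since $\langle A \rangle_r = A$, the vertex $y$ is not infected by $A$, so $\deg_A(y) \leq r-1$. For the in-$A^c$ piece: trivially $\deg_{A^c}(y) \leq |A^c| - 1 = \lfloor n/2 \rfloor - r + 2$. Adding these two bounds gives $\deg(y) \leq \lfloor n/2 \rfloor + 1$.

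The key step is then to invoke the minimum-degree hypothesis $\delta(G) \geq \lfloor n/2 \rfloor + 1$: the lower and upper bounds on $\deg(y)$ coincide, so both component inequalities must hold with equality for \emph{every} $y \in A^c$. Equality in the first gives $\deg_A(y) = r-1$; equality in the second says $y$ is adjacent to every other vertex of $A^c$, and since this holds for every $y \in A^c$, the induced subgraph $G[A^c]$ is a complete graph.

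There is really no hard step here: the fact is just the extremal case of the looser degree counts already used in Propositions~\ref{prop:vsmall-half-r} and \ref{prop:floor-half-r}. The only small things I would verify in passing are that the hypothesis $n \geq 2r$ makes $A^c$ nonempty and keeps $|A| \leq n$, both of which follow from $\lfloor n/2 \rfloor \geq r \geq r - 3$.
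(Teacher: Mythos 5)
Your proof is correct and is essentially the argument the paper has in mind — the paper simply points to ``the same arguments used in previous propositions'' (i.e., the degree-counting in Propositions~\ref{prop:small-half-3}, \ref{prop:big-half-3}, \ref{prop:vsmall-half-r}, \ref{prop:floor-half-r}), and your tight double count of $\deg(y) = \deg_A(y) + \deg_{A^c}(y)$ is exactly that argument made explicit. One tiny phrasing nit: rather than saying the upper and lower bounds ``coincide'' (which presumes $\delta(G) = \lfloor n/2\rfloor+1$ exactly), it is cleaner to say $\lfloor n/2\rfloor + 1 \leq \delta(G) \leq \deg(y) \leq \lfloor n/2\rfloor + 1$ forces equality throughout; the substance is the same.
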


The aim in all of the proofs of this section is to use the structural information about the graphs to find a set of $r$ vertices that internally spans at least $\lfloor n/2 \rfloor +2$ vertices (and hence percolates).  In some circumstances, finding many sets whose span is $\lfloor n/2 \rfloor +1$ can be quite useful as Fact~\ref{fact:big-half-r} provides a great deal of information about structure regarding such sets.

\begin{proposition}\label{prop:ceil-half-r}
Let $r \geq 4$, let $n$ be sufficiently large and odd, and let $G$ be a graph on $n$ vertices with $\delta(G) \geq \frac{n-1}{2} + (r-3)$.  If there is a set $A \subseteq V(G)$ with $|A| = \left\lceil \frac{n}{2} \right\rceil = \frac{n+1}{2}$ and $\langle A \rangle_r = A$, then $m(G, r) = r$.
\end{proposition}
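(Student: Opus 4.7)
The goal is to find an initial set $S \subseteq V(G)$ of size $r$ whose closure contains at least $\lceil n/2\rceil + 2 = (n+5)/2$ vertices, because then $\langle S\rangle_r = V(G)$ by Lemma \ref{lem:big-sets-perc}. Since $|A^c| = (n-1)/2$, the closure of $S$ must contain all of $A^c$ together with at least three vertices of $A$. I begin by recording the structural consequences of $\langle A\rangle_r = A$ obtained via the edge-counting used in the earlier propositions: every $y \in A^c$ satisfies $\deg_A(y) \in \{r-2, r-1\}$ and has at most one non-neighbour inside $A^c$, so $G[A^c]$ is a complete graph minus a partial matching; moreover, $\deg_A(y) = r-2$ forces $y$ to be adjacent to every other vertex of $A^c$. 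On the other side, every $x \in A$ satisfies $\deg_{A^c}(x) \geq r-3$, with equality forcing $x$ to be adjacent to every other vertex of $A$. Because $G[A^c]$ has minimum degree at least $|A^c|-2$, Lemma \ref{lem:all-sets-perc} applied inside $A^c$ (with $k=2$) shows that any $r$ infected vertices of $A^c$ will eventually infect all of $A^c$.

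The main construction mirrors the $K_{2,2}$ argument from Proposition \ref{prop:floor-half-r}. If $G[A, A^c]$ contains a $K_{2,2}$ with parts $\{a, b\} \subseteq A$ and $\{x, y\} \subseteq A^c$, near-completeness of $G[A^c]$ gives at least $|A^c|-4 \geq r-2$ common neighbours $v_1, \dots, v_{r-2}$ of $x$ and $y$ inside $A^c$; infecting $S = \{a, b, v_1, \dots, v_{r-2}\}$ triggers $x$ and $y$ at time $1$, which places $r$ infected vertices in $A^c$ and so infects all of $A^c$. This yields $|A^c|+2 = (n+3)/2$ infected vertices, one short of the Lemma \ref{lem:big-sets-perc} threshold. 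To secure the third $A$-vertex, I split on whether some $x_0 \in A$ has $\deg_{A^c}(x_0) \geq r$: in the positive case $x_0$ is automatically infected once $A^c$ is filled; in the negative case $\deg_{A^c}(x) \in \{r-3, r-2, r-1\}$ for every $x \in A$, and the edge identity $\sum_x \deg_{A^c}(x) = \sum_y \deg_A(y)$ together with the structural fact that vertices of $A_{\min} := \{x \in A : \deg_{A^c}(x) = r-3\}$ are adjacent to everything else in $A$ allows one to choose $\{a, b\}$ so that some $z \in A \setminus \{a, b\}$ with $\deg_{A^c}(z) \geq r-2$ satisfies $\{a, b\} \subseteq N(z)$, forcing $z$ to be infected. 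If instead $G[A, A^c]$ is $K_{2,2}$-free, I would adapt the matching construction from the second half of Proposition \ref{prop:floor-half-r}: choose $x \in A^c$ with $\deg_A(x) = r-1$, pick distinct $b_j \in A^c \setminus \{x\}$ with $b_j \sim a_j$ for each $a_j \in N(x) \cap A$ (distinctness from $K_{2,2}$-freeness), locate some $b_j \sim x$, and set $S = \{a_1, a_2, v_1, \dots, v_{r-2}\}$ with the $v_i$ chosen as common $A^c$-neighbours of $x$ and $b_j$; the same third-vertex analysis then finishes the argument.

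The central obstacle is precisely this third-$A$-vertex issue: in the analogous Proposition \ref{prop:floor-half-r}, the $K_{2, 2}$ construction already gives a closure of size $\lceil n/2\rceil + 2$ because $|A^c|$ is large enough there, whereas here $|A^c|$ is smaller by one, and the natural construction lands exactly one vertex short of the Lemma \ref{lem:big-sets-perc} bound. Closing this gap forces the pair $\{a, b\}$ (or the anchor vertex, in the $K_{2,2}$-free case) to be chosen using global information about the $A^c$-degree distribution inside $A$, and most of the combinatorial work is a careful subcase analysis separating the situation where some vertex of $A$ has unusually high $A^c$-degree from the degenerate situation where every $x \in A$ has $\deg_{A^c}(x) \in \{r-3, r-2, r-1\}$ and $G[A]$ is extremely dense.
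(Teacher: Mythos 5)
Your proposal takes a genuinely different route from the paper, but it leaves a real gap exactly at the point you identify as ``the central obstacle.''

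The paper does not attempt a $K_{2,2}$ dichotomy here. Instead it sets $A_r = \{x \in A : \deg_{A^c}(x) \geq r\}$ and splits on $|A_r|$. The case $A_r = \emptyset$ is handled by a clean reduction you miss: if no vertex of $A$ has $r$ neighbours in $A^c$, then $\langle A^c\rangle_r = A^c$, and since $|A^c| = (n-1)/2 = \lfloor n/2\rfloor$, Proposition~\ref{prop:floor-half-r} applies directly. This disposes of your entire ``negative case'' without any of the delicate $\{a,b\}$-selection you sketch. The case $|A_r| \geq 2$ is also handled without a $K_{2,2}$: pick $a \in A \setminus A_r$, a neighbour $b \in A^c$, and $r-1$ neighbours $v_1,\dots,v_{r-1}$ of $b$ in $A^c$; then $\{a,v_1,\dots,v_{r-1}\}$ infects $b$, hence all of $A^c$, hence all of $A_r$, giving closure of size at least $|A^c| + |A_r| + 1 \geq \lceil n/2\rceil + 2$. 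The hard case is $|A_r| = 1$, and the paper handles it with a mechanism entirely absent from your sketch: Fact~\ref{fact:big-half-r} is applied to many candidate $r$-sets simultaneously, and the conclusion is ``either one of them percolates, or the graph is forced into an almost-extremal shape'' --- specifically $G[A\setminus\{x\}]$ and $G[A^c]$ are cliques with exact bipartite degrees, from which a percolating set is built by inspection.

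The gap in your argument is concrete. You assert that in the degenerate situation one ``can choose $\{a,b\}$ so that some $z \in A\setminus\{a,b\}$ with $\deg_{A^c}(z) \geq r-2$ satisfies $\{a,b\} \subseteq N(z)$,'' but the pair $\{a,b\}$ is simultaneously constrained to lie in a $K_{2,2}$ of $G[A,A^c]$, and you give no argument that both constraints can be met at once. Likewise, in your ``positive case'' the automatically-infected $x_0$ only provides a third $A$-vertex if $x_0 \notin \{a,b\}$, and if every $K_{2,2}$ of $G[A,A^c]$ meets $A$ in a subset of $A_r$ (plausible when $|A_r|$ is small) this needs a separate argument. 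Finally your treatment of the $K_{2,2}$-free case is deferred to ``the same third-vertex analysis,'' which is the unresolved part. To repair the proof along your lines you would need to either (i) recover the $A_r = \emptyset \Rightarrow$ Proposition~\ref{prop:floor-half-r} reduction and a clean construction for $|A_r| \geq 2$, leaving only $|A_r| = 1$; or (ii) establish, as the paper does with Fact~\ref{fact:big-half-r}, that failure of a large family of candidate $r$-sets forces extremal structure. As written, the proposal identifies the right obstruction but does not overcome it.
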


\begin{proof}
Since any vertex $y \in A^c$ has at most $\frac{n-1}{2} - 1$ neighbours within $A^c$ and $\langle A \rangle_r = A$, then 
\[
r-2 = \frac{n-1}{2} + (r-3) - \left(\frac{n-1}{2} - 1\right) \leq \deg_A(x) \leq r-1
\]
and so every vertex in $A^c$ has at most $1$ non-neighbour in $A^c$.  As before, by Lemma~\ref{lem:all-sets-perc}, any set of $r$ vertices in $A^c$ infects all of $A^c$, at least.

Since $|A| = \frac{n+1}{2}$, then every vertex in $A$ has at least $r-3 \geq 1$ neighbours in $A^c$.  Set $A_r = \{x \in A \mid \deg_{A^c}(x) \geq r\}$.  Note that since $r|A_r| \leq e(A, A^c) \leq (r-1)|A^c|$, then $|A_r| \leq \frac{(r-1)}{r}\cdot \frac{(n-1)}{2} < \frac{n+1}{2}$ and so $A \setminus A_r \neq \emptyset$.  Any vertex in $A \setminus A_r$ has at most $2$ non-neighbours in $A$ and so any set of size $r$ in $A \setminus A_r$ infects the remainder of $A \setminus A_r$ by Lemma~\ref{lem:all-sets-perc}.

If $A_r = \emptyset$, then $\langle A^c \rangle_r = A^c$, but since $|A^c| = \frac{n-1}{2} = \lfloor n/2 \rfloor$, then by Proposition~\ref{prop:floor-half-r}, there is a set of size $r$ that percolates.

If $|A_r| \geq 2$, then choose any element $a \in A \setminus A_r$, let $b \in A^c$ be any neighbour of $a$ and let $v_1, v_2, \ldots, v_{r-1}$ be any $r-1$ neighbours of $b$ in $A^c$.  Then, letting $B = \{a, v_1, v_2, \ldots, v_{r-1}\}$ be the set of initially infected vertices, $B$ infects $b$ and so with $r$ infected vertices in $A^c$, $|\langle B \rangle_r| \geq |A^c \cup A_r \cup \{a\}| \geq \frac{n-1}{2} + 2+1 = \lceil n/2 \rceil +2$ and so by Lemma~\ref{lem:big-sets-perc}, $B$ percolates.

Suppose now that $|A_r| = 1$ and let $A_r = \{x\}$.  If $x$ has fewer than $r$ neighbours in $A$, then $A \setminus \{x\}$ is a closed set of size $\frac{n-1}{2}$ and so by Proposition~\ref{prop:floor-half-r}, $G$ contains a set of size $r$ that percolates.  The remainder of the proof involves considering many different sets of size $r$ and showing that if none percolate, then Fact~\ref{fact:big-half-r} can be used to deduce sufficient structural information about $G$ to find a small percolating set.

\begin{center}
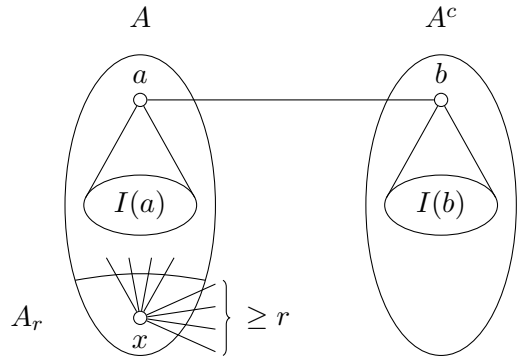
\begin{figure}
\begin{tikzpicture}
	\tikzstyle{vertex}=[circle, draw=black,  minimum size=5pt,inner sep=0pt]
	
	\filldraw[draw=black, fill = white] (0, 0) ellipse (1cm and 2cm);
	\filldraw[draw=black, fill = white] (4, 0) ellipse (1cm and 2cm);
	\draw (-0.866, -1) to[bend left=10] (0.866, -1);
	\node at (-1.5, -1.5) {$A_r$};
	\node[vertex, label=below:{$x$}] at (0, -1.5) (x) {};
	\node at (0, 2.5) {$A$};
	\node at (4, 2.5) {$A^c$};
	
	\node[vertex, label=above:{$a$}] at (0, 1.4) (a) {};
	\node[vertex, label=above:{$b$}] at (4, 1.4) (b) {};
	
	\draw (0, 0) ellipse (0.75cm and 0.4cm);
	\node at (0, 0) {$I(a)$};
	\draw (0.725, 0.1) -- (a) -- (-0.725, 0.1);
	
	\draw (4, 0) ellipse (0.75cm and 0.4cm);
	\node at (4, 0) {$I(b)$};
	\draw (4.725, 0.1) -- (b) -- (3.275, 0.1);
	
	\draw (a) -- (b);
	
	\draw (x) -- ++ (1, 0.45);
	\draw (x) -- ++ (1, 0.15);
	\draw (x) -- ++ (1, -0.15);
	\draw (x) -- ++ (1, -0.45);
	\draw [decorate, decoration={brace}] (1.1, -1) -- (1.1, -2);
	\node at (1.7, -1.5) {$\geq r$}; 
	
	\draw (x) -- ++ (-0.45, 0.8);
	\draw (x) -- ++ (-0.15, 0.8);
	\draw (x) -- ++ (0.15, 0.8);
	\draw (x) -- ++ (0.45, 0.8);

\end{tikzpicture}

\caption{The sets $I(a)$ and $I(b)$ for an edge $\{a, b\}$.}\label{fig:big-half-r-Isets}
\end{figure}
\end{center}

For every vertex $a \in V(G) \setminus \{x\}$, choose a set, denoted $I(a)$ of $r-1$ neighbours of $a$ so that, if $a \in A\setminus\{x\}$, then $I(a) \subseteq A\setminus\{x\}$ and if $a \in A^c$, then $I(a) \subseteq A^c$.  Every vertex in $A$ has at least one neighbour in $A^c$ and since $|A_r| =1$, then every vertex in $A^c$ has at least one neighbour in $A\setminus A_r$.  For any pair $\{a, b\} \in E(G)$ with $a \in A\setminus A_r$ and $b \in A^c$, then 
\begin{align*}
\langle \{b\} \cup I(a) \rangle_r &\supseteq A \cup \{b\}, \text{ and}\\
\langle \{a\} \cup I(b) \rangle_r &\supseteq A^c \cup A_r \cup \{a\}.
\end{align*}
See Figure~\ref{fig:big-half-r-Isets}.  Since each of the sets $\{b\} \cup I(a)$ and $\{a\} \cup I(b)$ each span a set of size at least $\frac{n+1}{2} + 1 = \frac{n-1}{2} +2$, either one of them percolates, or else by Fact~\ref{fact:big-half-r}, for every $a \in A\setminus A_r$, the graph induced by $G$ on $A \setminus \{a, x\}$ is a clique with every vertex having exactly $r-1$ neighbours in $A^c \cup \{a, x\}$ and similarly, for every $b \in A^c$, the set $A^c \setminus \{b\}$ induces a clique with every vertex having $r-1$ neighbours in $A \cup \{b\}$. 

\begin{center}
\begin{figure}
\begin{tikzpicture}
	\tikzstyle{vertex}=[circle, draw=black,  minimum size=5pt,inner sep=0pt]
	
	\filldraw[draw=black, fill = gray!20] (0, 0) ellipse (1cm and 2cm);
	\filldraw[draw=black, fill = gray!20] (4, 0) ellipse (1cm and 2cm);
	\draw (-0.866, -1) to[bend left=10] (0.866, -1);
	\node at (-1, -1.5) {$A_x$};
	\draw (3.134, -1) to[bend left=10] (4.866, -1);
	\node at (5, -1.5) {$B_x$};
	\node at (0, 2.5) {$A\setminus \{x\}$};
	\node at (4, 2.5) {$A^c$};
	
	\node[vertex, label=below:{$x$}] at (2, -2.5) (x) {};
	\draw (-0.1, -1.99) -- (x) --  (0.866, -1);
	\draw (3.134, -1) -- (x) -- (4.1, -1.99);

	\node[vertex, fill = white] at (0, -1.5) (a1) {};
	\node[vertex,  fill = white] at (0, 1.5) (a2) {};
	\node[vertex,  fill = white] at (4, -1.5) (b1) {};
	\node[vertex,  fill = white] at (4, 1.5) (b2) {};
	\draw (a1) -- (x) -- (b1);
	
	\draw (a2) -- ++ (1, 0.3);
	\draw (a2) -- ++ (1, 0);
	\draw (a2) -- ++ (1, -0.3);
	\draw (b2) -- ++ (-1, 0.3);
	\draw (b2) -- ++ (-1, 0);
	\draw (b2) -- ++ (-1, -0.3);
	\draw [decorate,decoration={brace}] (1.1, 1.9) --(1.1, 1.1);
	\draw [decorate,decoration={brace}] (2.9, 1.1) --(2.9, 1.9);
	\node at (2, 1.5) {$r-2$};
	
	\draw (a1) -- ++ (1.3, 1.5);
	\draw (a1) -- ++ (1.3, 1);
	\draw (b1) -- ++ (-1.3, 1.5);
	\draw (b1) -- ++ (-1.3, 1);
	\draw [decorate,decoration={brace}] (1.4, 0.1) --(1.4, -0.55);
	\draw [decorate,decoration={brace}] (2.6, -0.55) --(2.6, 0.1);
	\node at (2, -0.25) {$r-3$};

\end{tikzpicture}
\caption{Structure of the graph if no set $\{a\} \cup I(b)$ or $\{b\} \cup I(a)$ percolates.}\label{fig:r-big-half}
\end{figure}
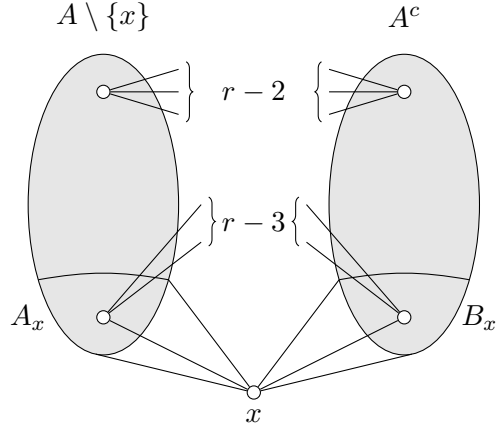
\end{center}

Note that any graph $H$ on at least $3$ vertices with the property that deleting any vertex gives a clique is itself a clique.  Thus, each of $G[A \setminus \{x\}]$ and $G[A^c]$ is a complete graph where every vertex in $A \setminus \{x\}$ has exactly $r-2$ neighbours in $A^c \cup \{x\}$ and every vertex in $A^c$ has exactly $r-2$ neighbours in $A$.

Set $A_x = A \cap N(x)$, $A_1 = A \setminus (\{x\} \cup N(x))$, $B_x = A^c \cap N(x)$ and $B_1 = A^c \setminus N(x)$.  By a previous comment, $|A_x|, |B_x| \geq r$.   Note that every vertex in $A_1$ has $r-2 \geq 2$ neighbours in $A^c$ and every vertex in $A_x$ has $r-3 \geq 0$ neighbours in $A^c$; see Figure~\ref{fig:r-big-half}.

If any vertex $a \in A \setminus \{x\}$ has two neighbours $b_1, b_2 \in B_x$, then let $\{v_1, v_2, \ldots, v_{r-2}\}$ be any $r-2$ vertices in $A_x \setminus \{a\}$ and consider initially infecting $\{b_1, b_2, v_1, v_2, \ldots, v_{r-2}\}$.  Both $x$ and $a$ are adjacent to all infected vertices and so become infected at the first time step.  Thereafter, the remainder of $A_x$ is infected and hence all of $A$.  Since at least $|A \cup \{b_1, b_2\}| = \lceil n/2 \rceil +2$ vertices are infected, the set percolates, by Lemma~\ref{lem:big-sets-perc}.

If any vertex $a \in A \setminus \{x \}$ has two neighbours $b_1, b_2 \in A^c$ with $b_1 \in B_1$, then since $b_1$ has at least $r-2\geq 2$ neighbours in $A \setminus \{x\}$, let $c \in A \setminus \{a, x\}$ be any other neighbour of $b_1$.  Let $v_1, v_2, \ldots, v_{r-2} \in A^c\setminus \{b_1, b_2\}$ be any $r-2$ vertices in $A^c$ and consider initially infecting the set $\{a, c, v_1, v_2, \ldots, v_{r-2}\}$.  In the first step $b_1$ is infected, then $b_2$ and subsequently the remainder of $A^c$ and so also $x$.  As at least $|A^c \cup \{a, c, x\}| = \lceil n/2 \rceil +2$ vertices are infected, the set percolates, by Lemma~\ref{lem:big-sets-perc}.

By symmetry, the same is true for any vertex in $A^c$ with two neighbours in $A_x$ or else two neighbours in $A \setminus \{x\}$, one of which is in $A_1$.

For any $r \geq 5$, every vertex in $A_1 \cup A_x$ has at least $r-3 \geq 2$ neighbours in $A^c$ and so either some vertex has $2$ neighbours in $B_x$ or $2$ neighbours one of which is in $B_1$.  In either case, there is some set of size $r$ that percolates.

\begin{center}
\begin{figure}
\begin{tikzpicture}
	
	\tikzstyle{vertex}=[circle, draw=black,  minimum size=5pt,inner sep=0pt]
	
	\filldraw[draw=black, fill = gray!20] (0, 0) ellipse (1cm and 2cm);
	\filldraw[draw=black, fill = gray!20] (4, 0) ellipse (1cm and 2cm);
	\node at (0, 2.5) {$A_x$};
	\node at (4, 2.5) {$B_x$};
	
	\node[vertex, label=below:{$x$}] at (2, -3) (x) {};
%
	
	\node[vertex, fill = black, label=left:{$a$}] at (0, 1.5) (a1) {};
	\node[vertex,  fill = black, label=left:{$b$}] at (0, 1) (a2) {};
	\node[vertex, fill = white] at (0, 0.5) (a3) {};
	\node at (0, 0.1) {$\vdots$};
	\node[vertex, fill = white] at (0, -0.5) (a4) {};
	\node[vertex, fill = white] at (0, -1) (a5) {};
	\node[vertex, fill = white] at (0, -1.5) (a6) {};
		
	\node[vertex, fill = white] at (4, 1.5) (b1) {};
	\node[vertex,  fill = white] at (4, 1) (b2) {};
	\node[vertex, fill = white] at (4, 0.5) (b3) {};
	\node at (4, 0.1) {$\vdots$};
	\node[vertex, fill = white] at (4, -0.5) (b4) {};
	\node[vertex, fill = black, label=right:{$c$}] at (4, -1) (b5) {};
	\node[vertex, fill = black, label=right:{$d$}] at (4, -1.5) (b6) {};
	
	\draw (a1) -- (x) -- (a2);
	\draw (a3) -- (x) -- (a4);
	\draw (a5) -- (x) -- (a6);
	\draw (x) -- (0.1, -0.1);
	\draw (b1) -- (x) -- (b2);
	\draw (b3) -- (x) -- (b4);
	\draw (b5) -- (x) -- (b6);
	\draw (x) -- (3.9, -0.1);
	\draw (a1) -- (b1);
	\draw (a2) -- (b2);
	\draw (a3) -- (b3);
	\draw (a4) -- (b4);
	\draw (a5) -- (b5);
	\draw (a6) -- (b6);

\end{tikzpicture}\caption{Case where $r = 4$ and every vertex in $A\setminus \{x\}$ has only one neighbour in $A^c$.}\label{fig:r-big-half-cases}
\end{figure}
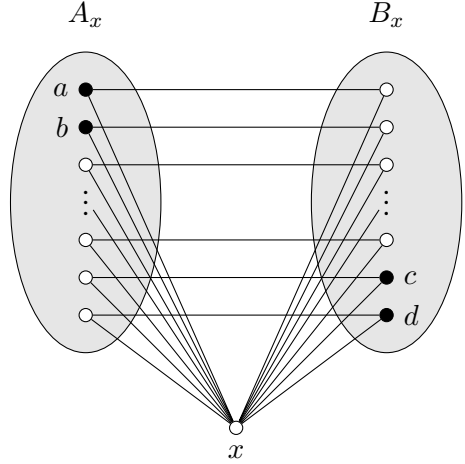
\end{center}

The only remaining case is when $r = 4$ and there are no vertices in $A\setminus \{x\}$ with two neighbours in $A^c$ and similarly, no vertices in $A^c$ with two neighbours in $A \setminus \{x\}$.  That is, $A_1 = B_1 = \emptyset$ and $G$ consists of a clique on $A_x$, a clique on $B_x$, all vertices in $A_x \cup B_x$ joined to $x$ and a perfect matching between $A_x$ and $B_x$, as in Figure~\ref{fig:r-big-half-cases}.  Since $(n-1)/2 \geq 4$, choose $a, b \in A_x$ and $c, d \in B_x$ with $c, d \notin N(a) \cup N(b)$ and initially infect the set $\{a, b, c, d\}$.  The vertex $x$ is infected at the first time step.  At the second time step, the neighbours of $a$ and $b$ in $A^c$ and the neighbours of $c$ and $d$ in $A_x$ are infected and then all remaining vertices are infected in the third time step.

This complete the proof in the case that $\langle A \rangle_r = A$ and $|A| = \left\lceil \frac{n}{2} \right\rceil$.
\end{proof}

The final remaining case to be dealt with is the following.

\begin{proposition}\label{prop:vbig-half-r}
Let $r \geq 4$, $n$ be sufficiently large, and let $G$ be a graph on $n$ vertices with $\delta(G) \geq \lfloor n/2 \rfloor + (r-3)$.  If there is a set $A \subseteq V(G)$ with $|A| = \lceil n/2 \rceil +1$ and $\langle A \rangle_r = A$, then $m(G, r) = r$.
\end{proposition}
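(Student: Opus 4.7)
The plan is to adapt the argument of Proposition~\ref{prop:ceil-half-r} to the slightly larger size $|A| = \lceil n/2\rceil + 1$.

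First I would repeat the edge-count of Fact~\ref{fact:big-half-r}. Each $b \in A^c$ has $\deg_A(b) \leq r-1$ (since $\langle A\rangle_r = A$) and $\deg_{A^c}(b) \leq |A^c| - 1 = \lfloor n/2\rfloor - 2$, and the minimum-degree bound $\lfloor n/2\rfloor + r - 3$ forces both inequalities to be equalities. Hence $G[A^c]$ is a complete graph on $\lfloor n/2\rfloor - 1$ vertices, and each $b \in A^c$ has exactly $r-1$ neighbours in $A$. In particular, by Lemma~\ref{lem:all-sets-perc}, any $r$ vertices of $A^c$ already infect all of $A^c$.

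Let $A_r = \{a \in A : \deg_{A^c}(a) \geq r\}$. After $A^c$ is infected, every vertex of $A_r$ joins the closure, so any $r$ vertices of $A^c$ span at least $\lfloor n/2\rfloor - 1 + |A_r|$ vertices. If $|A_r|$ is large enough that this exceeds $\lceil n/2\rceil + 1$, the set percolates by Lemma~\ref{lem:big-sets-perc}. A small refinement, taking $r-1$ vertices of $A^c$ together with some $a \in A$ having $\deg_{A^c}(a) \geq 1$ (which must exist since $e(A, A^c) = (r-1)(\lfloor n/2\rfloor - 1) > 0$), adds $a$ to the closure and extends the easy regime to slightly smaller $|A_r|$.

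When $|A_r|$ falls below that threshold, I would follow the $I(\cdot)$-trick from Proposition~\ref{prop:ceil-half-r}: to each $b \in A^c$ assign a set $I(b) \subseteq A^c$ of $r-1$ neighbours (available since $A^c$ is a clique of size at least $r$), and to each $a \in A$ try to assign $I(a) \subseteq A$ of $r-1$ neighbours. The latter fails only when $\deg_A(a) = r-2$, in which case the degree inequality forces $a$ to be adjacent to all of $A^c$; such vertices are handled as a special case. For every edge $\{a,b\}$ across the partition, the initial set $\{a\} \cup I(b)$ has closure containing $A^c \cup \{a\}$ plus a further cascade into $A$, and the initial set $\{b\} \cup I(a)$ (when defined) infects $a$ immediately and then cascades into $A$. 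If one of these reaches $\lceil n/2\rceil + 2$ infected vertices it percolates by Lemma~\ref{lem:big-sets-perc}; otherwise its closure is a closed set of size exactly $\lceil n/2\rceil + 1$, and the edge-count of the first paragraph re-applies to this new closed set, forcing extra clique structure in $G$. Combining all of these rigidity constraints pins $G$ down as essentially two large cliques joined by a nearly-regular bipartite graph, in which an explicit percolating $r$-set can be built by choosing non-adjacent pairs on each side, analogously to the concluding case of the proof of Proposition~\ref{prop:ceil-half-r}.

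The main obstacle will be this last case: assembling the many rigidity constraints obtained by re-applying Step~1 to different candidate closed sets of size $\lceil n/2\rceil + 1$ into a precise-enough structural description of $G$ to exhibit a percolating $r$-set. This is particularly delicate when $r = 4$ with $n$ odd, since then $\delta(G) \geq \lfloor n/2\rfloor + 1$ leaves the least slack and vertices $a \in A$ with $\deg_{A^c}(a) = 0$ are possible, requiring a careful explicit construction in the same spirit as the final paragraph of the proof of Proposition~\ref{prop:ceil-half-r}.
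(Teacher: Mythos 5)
Your proposal diverges from the paper's argument and, as you yourself flag, has a genuine gap in the final structural case. The paper's proof avoids the $I(\cdot)$-machinery of Proposition~\ref{prop:ceil-half-r} entirely. After establishing Fact~\ref{fact:big-half-r} and defining $A_r = \{a\in A : \deg_{A^c}(a)\ge r\}$, the key observation is that $\langle A\setminus A_r\rangle_r$ is a closed subset of $A$: if it is a proper subset, it is a closed set of size strictly less than $\lceil n/2\rceil+1$ (and, being large, falls in the range $[\lfloor n/2\rfloor-1, \lceil n/2\rceil]$ by Proposition~\ref{prop:no-mid-size-closed}), so one of Propositions~\ref{prop:vsmall-half-r}, \ref{prop:floor-half-r}, \ref{prop:ceil-half-r} applies directly. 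Otherwise $\langle A\setminus A_r\rangle_r = A$, and then a \emph{single} cross-edge $\{a,b\}$ with $a\in A\setminus A_r$, $b\in A^c$ does the whole job: choosing $r-1$ neighbours $v_1,\dots,v_{r-1}$ of $a$ inside $A\setminus A_r$, the set $\{b,v_1,\dots,v_{r-1}\}$ infects $a$, hence (via Lemma~\ref{lem:all-sets-perc} in $G[A\setminus A_r]$) all of $A\setminus A_r$, hence all of $A$, giving $\ge\lceil n/2\rceil+2$ vertices, so it percolates. This completely sidesteps the ``small $|A_r|$'' rigidity analysis you propose, at the modest cost of one short exceptional case ($r=4$, $n$ odd, $A\setminus A_r$ having no $A^c$-neighbours at all).

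Two concrete problems with your route. First, your ``small refinement'' of taking $\{a\}$ together with $r-1$ vertices of $A^c$ only adds one vertex beyond $A^c\cup A_r$ to the closure, so it yields percolation only when $|A_r|\ge 2$ (or $\ge 3$ when $n$ is odd); this still leaves $|A_r|\in\{0,1\}$ (or $\{0,1,2\}$) unresolved, and those are precisely the hard cases. The paper's choice of $v_1,\dots,v_{r-1}$ inside $A\setminus A_r$ (rather than in $A^c$) is what infects \emph{all} of $A$ rather than just $A^c\cup A_r\cup\{a\}$, and this is independent of $|A_r|$. Second, your assertion that $\langle\{b\}\cup I(a)\rangle_r\supseteq A\cup\{b\}$ requires $A_r\subseteq\langle A\setminus A_r\rangle_r$, which you never establish; that hypothesis is exactly the dichotomy the paper uses. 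Without it, infecting $A\setminus A_r$ need not cascade into $A_r$ (those vertices may have too few neighbours in $A$), and the closure can stall, so ``its closure is a closed set of size exactly $\lceil n/2\rceil+1$'' does not follow. You would need the same reduction to smaller closed sets that the paper uses, at which point the $I(\cdot)$ apparatus becomes superfluous.
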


\begin{proof}
By Fact~\ref{fact:big-half-r}, the set $A^c$ induces a complete graph with every vertex having exactly $r-1$ neighbours in $A$.  Any vertex $x \in A$ has
\[
\deg_{A^c}(x) \geq \lfloor n/2 \rfloor + (r-3) - \lceil n/2 \rceil =
\begin{cases}
	r-3	&\text{if $n$ is even},\\
	r-4	&\text{if $n$ is odd}
\end{cases}.
\]
As in previous proofs, set $A_r = \{x \in A \mid \deg_{A^c}(x) \geq r\}$.  Again, if $A_r = \emptyset$, then $A^c$ is a closed set of size $\lfloor n/2 \rfloor -1$ and so by Proposition~\ref{prop:vsmall-half-r}, there is a percolating set of size $r$.  Thus, assume that $A_r \neq \emptyset$.  Every vertex in $A\setminus A_r$ has at most $3$ non-neighbours in $A$.  If $A_r \nsubseteq \langle A \setminus A_r \rangle_r$, then there is a closed set that is smaller than $A$ and so by one of Propositions~\ref{prop:vsmall-half-r}, \ref{prop:floor-half-r}, or \ref{prop:ceil-half-r}, $G$ has a percolating set of size $r$.  Therefore, assume that $\langle A \setminus A_r \rangle_r = A$.

Note that since
\[
(r-1)\left( \lfloor n/2 \rfloor -1 \right) = e(A, A^c) \geq r|A_r|,
\]
then $|A_r| \leq \frac{(r-1)}{r}(\lfloor n/2 \rfloor -1) \leq \lceil n/2 \rceil - (r+3)$ as long as $n \geq 2(r^2+2r+1)$.

If there is any vertex $a \in A \setminus A_r$ with a neighbour $b \in A^c$, then since $a$ has at most $3$ non-neighbours in $A$, there are at least $r-1$ neighbours of $a$ in $A \setminus A_r$.  Let $v_1, v_2, \ldots, v_{r-1}$ be any neighbours of $a$ in $A \setminus A_r$.  Since the set $\{b, v_1, v_2, \ldots, v_{r-1}\}$ infects $a$ and hence all of $A\setminus A_r$ and subsequently $A_r$, the closure of this set has at least $|A| + 1 = \lceil n/2 \rceil +2$ vertices and hence the set percolates by Lemma~\ref{lem:big-sets-perc}.

The only case in which there can be no edges between $A \setminus A_r$ and $A^c$ is when $r = 4$, $n$ is odd, every vertex in $A^c$ has $r-1 = 3$ neighbours in $A_r$ and $G[A \setminus A_r]$ is a complete graph with all vertices in $A \setminus A_r$ adjacent to every vertex in $A_r$.  In this case $|A_r| \geq 3$.   If $|A_r| \geq 4$, then any set of size $4$ in $A^c$ percolates.  Therefore, assume that $|A_r| = 3$.  The graph is as in Figure~\ref{fig:vbig-half-r4}. Then any set consisting of two vertices from $A \setminus A_r$ and two vertices from $A^c$ will infect all of $A_r$ and subsequently the remainder of the graph. 

\begin{center}
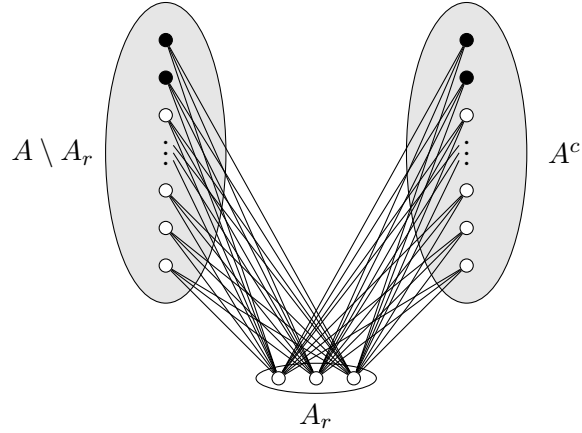
\begin{figure}[htb]
\begin{tikzpicture}
	
	\tikzstyle{vertex}=[circle, draw=black,  minimum size=5pt,inner sep=0pt]
	
	\filldraw[draw=black, fill = gray!20] (0, 0) ellipse (0.8cm and 2cm);
	\filldraw[draw=black, fill = gray!20] (4, 0) ellipse (0.8cm and 2cm);
	\node at (-1.5, 0) {$A\setminus A_r$};
	\node at (5.3, 0) {$A^c$};
	
	\draw (2, -3) ellipse (0.8cm and 0.2cm);
	\node[vertex] at (1.5, -3) (c1) {};
	\node[vertex] at (2, -3) (c2) {};
	\node[vertex] at (2.5, -3) (c3) {};
	\node at (2, -3.5) {$A_r$};
		
	\node[vertex, fill = black] at (0, 1.5) (a1) {};
	\node[vertex,  fill = black] at (0, 1) (a2) {};
	\node[vertex, fill = white] at (0, 0.5) (a3) {};
	\node at (0, 0.1) {$\vdots$};
	\node[vertex, fill = white] at (0, -0.5) (a4) {};
	\node[vertex, fill = white] at (0, -1) (a5) {};
	\node[vertex, fill = white] at (0, -1.5) (a6) {};
		
	\node[vertex, fill = black] at (4, 1.5) (b1) {};
	\node[vertex,  fill = black] at (4, 1) (b2) {};
	\node[vertex, fill = white] at (4, 0.5) (b3) {};
	\node at (4, 0.1) {$\vdots$};
	\node[vertex, fill = white] at (4, -0.5) (b4) {};
	\node[vertex, fill = white] at (4, -1) (b5) {};
	\node[vertex, fill = white] at (4, -1.5) (b6) {};
	
	\draw (a1) -- (c1) -- (a2);
	\draw (a3) -- (c1) -- (a4);
	\draw (a5) -- (c1) -- (a6);
	\draw (c1) -- (0.1, -0.1);
	\draw (b1) -- (c1) -- (b2);
	\draw (b3) -- (c1) -- (b4);
	\draw (b5) -- (c1) -- (b6);
	\draw (c1) -- (3.9, 0.1);

	\draw (a1) -- (c2) -- (a2);
	\draw (a3) -- (c2) -- (a4);
	\draw (a5) -- (c2) -- (a6);
	\draw (c2) -- (0.1, 0);
	\draw (b1) -- (c2) -- (b2);
	\draw (b3) -- (c2) -- (b4);
	\draw (b5) -- (c2) -- (b6);
	\draw (c2) -- (3.9, 0);	
	
	\draw (a1) -- (c3) -- (a2);
	\draw (a3) -- (c3) -- (a4);
	\draw (a5) -- (c3) -- (a6);
	\draw (c3) -- (0.1, 0.1);
	\draw (b1) -- (c3) -- (b2);
	\draw (b3) -- (c3) -- (b4);
	\draw (b5) -- (c3) -- (b6);
	\draw (c3) -- (3.9, -0.1);	
\end{tikzpicture}

\caption{$r = 4$ and no edges between $A \setminus A_r$ and $A^c$.}\label{fig:vbig-half-r4}
\end{figure}
\end{center}

In all cases, there is some set of $r$ vertices that percolates and so $m(G, r) = r$.
\end{proof}

The proof of Theorem~\ref{thm:main-ub} can now be completed.

\begin{proof}[Proof of Theorem~\ref{thm:main-ub}]
Let $n$ be large enough to apply the lemmas and propositions given previously and let $G$ be a graph on $n$ vertices with $\delta(G) \geq \lfloor n/2 \rfloor + (r-3)$.  By Lemma~\ref{lem:complete-bip-r}, $G$ contains a copy of $K_{r, r-1}$ and the $r$ vertices in one partition set, $A$, have closure $|\langle A \rangle_r| \geq 2r-1 > 2(r-1)$.  By Lemma~\ref{lem:big-sets-perc} and Proposition~\ref{prop:no-mid-size-closed}, either $A$ percolates or else $|\langle A \rangle_r| \in \left[\lfloor n/2 \rfloor -1, \lceil n/2 \rceil + 1\right]$. If $|\langle A \rangle_r| \in \left[\lfloor n/2 \rfloor -1, \lceil n/2 \rceil + 1\right]$, then by Proposition~\ref{prop:vsmall-half-r}, \ref{prop:floor-half-r}, \ref{prop:ceil-half-r}, or \ref{prop:vbig-half-r}, $G$ contains a percolating set of size $r$.
\end{proof}

\section{Open problems}\label{sec:open}

There are a number of natural questions related to the results in this paper that remain open.  One could ask for the conditions on $\delta(G)$ that guarantee $m(G, r) \leq k$ for a fixed $k \geq r+1$.  Following the line of inquiry in \cite{DFLMPU16} and \cite{FPR15}, one might consider the lower bounds on $\sigma_2(G)$ that guarantee that $m(G, r) = r$ for $r \geq 3$.  A problem that may be quite technical would be the characterization of those small graphs for which $\delta(G) = \lfloor n/2 \rfloor + \min\{1, r-3\}$ but $m(G, r) > r$.

\end{document}